\newtheorem{theorem}{Theorem}[section]
\newtheorem{lemma}[theorem]{Lemma}
\newtheorem{definition}[theorem]{Definition}
\newtheorem{proposition}[theorem]{Proposition}
\newtheorem{corollary}[theorem]{Corollary}
\begin{document}
\title{Spaces of Geodesic Triangulations of Surfaces}

%    Information for first author
\author{Yanwen Luo}
%    Address of record for the research reported here
\address{Department of Mathematics, University of California, Davis
, California 95616}
%    Current address
%\curraddr{Department of Mathematics,
%Case Western Reserve University, Cleveland, Ohio 43403}
\email{ywluo@ucdavis.edu}
%    \thanks will become a 1st page footnote.
\thanks{The author was supported in part by NSF Grant DMS-1719582.}

\keywords{geodesic triangulations, Tutte's embedding}

\begin{abstract}

We give a short proof of the contractibility of the space of geodesic triangulations with fixed combinatorial type of a convex polygon in the Euclidean plane. Moreover, for any $n>0$, we show that there exists a space of geodesic triangulations of a polygon with a triangulation, whose $n$-th homotopy group is not trivial. 
\end{abstract}

\maketitle

\section{Introduction}
This paper provides two results concerning the space of geodesic triangulations of a planar polygon. We first give a short new proof of the contractibility of the space of geodesic triangulations with fixed combinatorial type of a convex polygon, originally proved by Bloch, Connelly, and Henderson \cite{bloch1984space}, following a series of partial results \cite{bing1978linear, cairns1944deformations, cairns1944isotopic, ho1973certain}. We then consider the homotopy groups of the space of geodesic triangulations of a non-convex polygon. We show that each homotopy group can be non-trivial. This answers an open question asked in 1980 \cite{connelly1983problems}.

\begin{figure}[H]
  \includegraphics[width=0.7\linewidth]{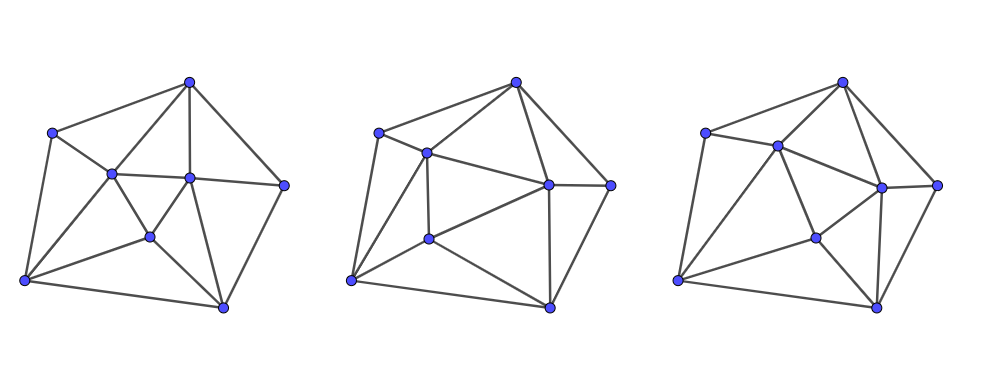}
  \caption{Geodesic triangulations of a polygon. }
\end{figure}

An embedded $n$-sided polygon $\Omega$ in the plane is determined by a map $\phi$ from the set $V_B = \{v_1,v_2, \cdots, v_n, v_{n+1} = v_1\}$ to $\mathbb{R}^2$, and line segments connecting the images under $\phi$ of two consecutive vertices in $V_B$ . We assume in the rest of this paper that $T = (V, E, F)$ is a triangulation of $\Omega$ with vertices $V$, edges $E$ and faces $F$ such that   $V = V_I\cup V_B$, where $V_I$ is the set of interior vertices and $V_B$ is the set of boundary vertices.

A  \textit{geodesic triangulation} with combinatorial type  $T$ of $\Omega$ is an embedding $\psi$ of the $1$-skeleton of $T$ in the plane  such that $\psi$ agrees with $\phi$ on $V_B$, and $\psi$ maps every edge in $E$ to a line segment parametrized by arc length. The set of these maps is called the \textit{space of geodesic triangulations on $\Omega$ with combinatorial type $T$}, and is denoted by $X(\Omega, T)$. Each geodesic triangulation is uniquely determined by the positions of the interior vertices in $V_I$, so the topology of $X(\Omega, T)$ is induced by $\Omega^{|V_I|} \subset \mathbb{R}^{2|V_I|}$. The main result of this paper is

\begin{theorem}
For any integer $n>0$, there exists a planar polygon $\Omega$ with a triangulation $T$ of $\Omega$ such that the space $X(\Omega, T)$ of geodesic triangulations with combinatorial type $T$ of $\Omega$ has non-trivial $n$-th homotopy group.
\end{theorem}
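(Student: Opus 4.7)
The plan is to construct, for each integer $n \geq 1$, an explicit non-convex polygon $\Omega_n$ with a triangulation $T_n$ whose configuration space $X(\Omega_n, T_n)$ has nontrivial $n$-th homotopy group. Since the convex case yields a contractible space, the non-convexity of $\Omega_n$ is essential: concave features of the boundary impose non-crossing constraints on geodesic edges that carve topologically nontrivial ``forbidden'' regions out of the configuration space of interior vertices. The overall strategy is to isolate a gadget whose local configuration space has controlled topology, and then combine and propagate copies of this gadget to realize $S^n$ (or a space of which $S^n$ is a retract) inside $X(\Omega_n, T_n)$.

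For the warm-up $n=1$, I would take a polygon with a single thin concave notch (a slit-like indentation) and a triangulation with two interior vertices joined by an edge required to pass to one side of the notch. The space of valid positions deformation retracts onto an annular region, giving $\pi_1 \neq 0$. For general $n$, my approach would be to iterate this gadget: arrange $n+1$ independent concave notches around the boundary of $\Omega_n$ and place $n+1$ interior vertices whose motions are each constrained by one notch via their incident edges. I would then aim to exhibit an explicit continuous retraction $r : X(\Omega_n, T_n) \to S^n$, realizing $S^n$ as a retract so that $\mathbb{Z} \hookrightarrow \pi_n(X(\Omega_n, T_n))$. An alternative is a suspension-style induction: argue that adding one more notch and one more interior vertex to $(\Omega_{n-1}, T_{n-1})$ raises the nontrivial homotopy dimension by one, by exhibiting a fibration whose fiber is an annular region and applying the long exact sequence of homotopy groups.

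The main obstacle will be the combinatorial and geometric design for general $n$: one must ensure both that $T_n$ is a genuine simplicial triangulation of the embedded polygon $\Omega_n$, and that the configuration space $X(\Omega_n, T_n)$ has the claimed topology without unwanted collapses coming from the other degrees of freedom of the interior vertices. A likely technical step is to analyze $X(\Omega_n, T_n)$ via a projection onto the position of one ``central'' vertex, realizing it as a fibration whose total space retains the desired $\pi_n$; verifying this requires a careful case analysis of which geodesic edges are affected by each notch and checking that the non-crossing constraints decouple cleanly enough to produce an honest $S^n$ rather than merely a $K(\pi,1)$ or a lower-connectivity space.
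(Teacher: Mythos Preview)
Your proposal has the right overall architecture---build a local gadget and replicate it $n+1$ times---but there is a genuine gap in the mechanism for combining gadgets. If the $n+1$ notches are truly \emph{independent}, each constraining its own interior vertex with no interaction among them, then after retracting away the irrelevant degrees of freedom the configuration space is homotopy equivalent to a \emph{product} $\prod_{i=1}^{n+1} A_i$ of the individual gadget spaces. If each $A_i \simeq S^1$ (your annulus picture for $n=1$), the product is the torus $(S^1)^{n+1}$, a $K(\mathbb{Z}^{n+1},1)$ with $\pi_n = 0$ for all $n \geq 2$; no retraction onto $S^n$ can exist. Your fibration alternative fails for the same reason: a fibration with fiber $S^1$ satisfies $\pi_k(E)\cong\pi_k(B)$ for $k\geq 2$ from the long exact sequence, so adding an independent annular factor does not raise the nontrivial homotopy dimension. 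You flag exactly this danger in your last sentence, but the proposal contains no device to avoid it.

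The paper's construction is designed precisely around this obstruction. Its building block $\mathcal{P}$ has $X(\mathcal{P},T)$ \emph{disconnected} rather than annular: an interior vertex $P$ can sit on the left or the right but cannot slide across unless an adjacent vertex $G$ (or $K$) is first displaced vertically---an ``admissible perturbation''. The $n+1$ copies $\mathcal{P}_1,\dots,\mathcal{P}_{n+1}$ are then stacked so that the vertex $G_i$ of $\mathcal{P}_i$ \emph{coincides} with the vertex $K_{i+1}$ of $\mathcal{P}_{i+1}$; moving $G_i$ up to unlock $\mathcal{P}_i$ is simultaneously a \emph{forbidden} perturbation for $\mathcal{P}_{i+1}$. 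Hence any proper subset of the blocks can be unlocked, but never all $n+1$ at once. Nontriviality is then detected not by a retraction onto $S^n$ but by projecting to the $x$-coordinates of the $n+1$ sliding vertices $P_i$: the image in $\mathbb{R}^{n+1}$ omits a specific point $(t_0,\dots,t_0)$, and an explicitly built map $f:\partial I^{n+1}\to X(\Omega^n,T^n)$ projects to a degree-one map onto a small cube boundary enclosing that point, so $f$ cannot extend to $I^{n+1}$. The essential missing idea in your plan is this deliberate \emph{coupling} of the gadgets through a shared vertex whose displacement helps one block and hurts its neighbor.
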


Notice that $X(\Omega, T)$ could be empty if the boundary is complicated. For instance, if the polygon is not star-shaped, then there is no geodesic  triangulation with only one interior vertex. 

Ho \cite{ho1973certain} showed that the space $X(\Omega, T)$ is equivalent to the space $L(\Omega, T)$ of simplexwise linear homeomorphisms of $\Omega$ with triangulation $T$. The homotopy type of the space $L(\Omega, T)$ has been studied in \cite{bing1978linear, bloch1984space, cairns1944isotopic, ho1973certain}, because it is closely related to the problem of existence and uniqueness of differentiable structures on triangulated manifolds. In addition, the path-connectedness of $L(\Omega, T)$ has implications to graph morphing problems in computational geometry \cite{de2003tutte, floater1999morph, surazhsky2001controllable, surazhsky2003intrinsic}.

Cairns \cite{cairns1944deformations, cairns1944isotopic} initiated an investigation of  the topology of the space of geodesic triangulations of a geometric triangle in the Euclidean plane and the round 2-sphere.
\begin{theorem}[Cairns \cite{cairns1944isotopic}]
If $\Omega$ is a geometric triangle with a triangulation $T$ in the plane, then $X(\Omega, T)$ is path-connected.
\end{theorem}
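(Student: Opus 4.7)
My plan is to prove path-connectedness by reducing every geodesic triangulation to a canonical one via Tutte's spring embedding theorem, taking advantage of the convexity of the boundary triangle $\Omega$. The key observation is that, because $\Omega$ is convex, for any assignment of strictly positive weights $w = \{w_{uv}\}$ on the edges of $T$ incident to interior vertices, there is a unique embedding $\psi_w \in X(\Omega, T)$ satisfying $\psi_w(v) = \sum_{u \sim v} \lambda_{uv}\, \psi_w(u)$ for each interior $v$, where $\lambda_{uv} = w_{uv}/\sum_{u'\sim v} w_{u'v}$. Setting all $w_{uv} = 1$ provides a distinguished target $\psi_* \in X(\Omega, T)$, and it suffices to connect an arbitrary $\psi \in X(\Omega, T)$ to $\psi_*$ by a continuous path.

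The main step is to realize $\psi$ itself as a Tutte embedding $\psi_{w^{\psi}}$ for some strictly positive weights $w^{\psi}$. For each interior vertex $v$, the image of its star $\psi(\mathrm{St}(v))$ is a simple polygon whose boundary cycles through the neighbors of $v$, and $\psi(v)$ lies in its interior. Because this polygon is star-shaped with respect to $\psi(v)$, Floater's mean value coordinates (or an analogous convex-combination construction) produce strictly positive coefficients $\lambda^{\psi}_{uv}$ such that $\psi(v) = \sum_{u \sim v} \lambda^{\psi}_{uv}\, \psi(u)$. Taking $w^{\psi}_{uv} = \lambda^{\psi}_{uv}$ presents $\psi$ as a weighted Tutte embedding. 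The desired homotopy is then the linear interpolation of weights $w^t_{uv} = (1-t) w^{\psi}_{uv} + t$ for $t \in [0,1]$: since $w^t$ is strictly positive at every time, Tutte's theorem gives $\psi_t := \psi_{w^t} \in X(\Omega, T)$, and $\psi_t$ depends continuously on $t$ because it is the unique solution of a linear system whose coefficients vary continuously.

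The main obstacle I anticipate is the construction of \emph{strictly} positive barycentric weights representing $\psi(v)$: although $\psi(v)$ lies in the convex hull of its neighbors, arbitrary convex combinations may force some coefficients to vanish, which would destroy the invertibility of the Tutte linear system at intermediate times. This is exactly where a specific, geometrically canonical choice such as mean value coordinates is essential, and I would spend the bulk of the argument verifying positivity and continuous dependence of $w^{\psi}$ on $\psi$. A secondary point is to check that the hypotheses of Tutte's theorem genuinely apply to every triangulation $T$ of $\Omega$; this reduces to a short combinatorial verification that the interior vertices admit no separating pair with respect to the fixed convex boundary.
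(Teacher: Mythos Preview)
Your proposal is correct and follows essentially the same route as the paper's own argument in Section~3: realize each geodesic triangulation as a Tutte embedding via mean value coordinates, then linearly interpolate in the convex space of positive weights and invoke Floater's version of Tutte's theorem at every time. The paper packages this as a homotopy equivalence between $X(\Omega,T)$ and the convex weight space $W(\Omega,T)$ (yielding contractibility, hence path-connectedness), but the ingredients and the logic are the same; note also that your worry about 3-connectivity is unnecessary, since Floater's Theorem~2.1 applies to any triangulation of a convex polygon without additional graph-connectivity hypotheses.
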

Ho \cite{ho1973certain} then proved that this space is simply-connected.

\begin{theorem}[Ho \cite{ho1973certain}]
If $\Omega$ is a geometric triangle with a triangulation $T$ in the plane, then $X(\Omega, T)$ is simply-connected.
\end{theorem}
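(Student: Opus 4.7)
The plan is to prove simple-connectedness by homotoping an arbitrary loop into a ``one-vertex-at-a-time" form and then contracting it using the star-shapedness of single-vertex motion spaces. Given a loop $\gamma : S^1 \to X(\Omega, T)$ based at some fixed $\psi_0$, I would first exploit the openness of $X(\Omega, T)$ in the parameter space $(\mathbb{R}^2)^{|V_I|}$. By compactness of $S^1$, one can subdivide $\gamma$ finely enough that each piece is contained in a small axis-aligned open box lying entirely in $X(\Omega, T)$. Within each such box, the piece can be replaced by a sequence of axis-aligned segments, each moving only a single interior vertex. This yields a homotopic loop $\tilde\gamma$ consisting of a finite concatenation of single-vertex motions.

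The key geometric observation is that for any interior vertex $v$ and any fixed valid positions of the remaining interior vertices, the set $S_v$ of valid positions of $v$ is precisely the kernel of the link polygon of $v$. Since kernels of simple polygons are always star-shaped, $S_v$ is a star-shaped open region of $\mathbb{R}^2$. Consequently, any loop in $X(\Omega, T)$ that moves only the single vertex $v$ (with all other interior vertices fixed) lies inside $S_v$ and is null-homotopic through single-vertex motions.

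To contract $\tilde\gamma$, I would proceed by induction on the number of single-vertex pieces, at each step folding two adjacent single-vertex motions into one combined motion with the same endpoints while keeping the rest of the loop fixed. Repeated folding shrinks the loop to a point at $\psi_0$. The main obstacle is precisely this folding step: combining adjacent motions involving distinct vertices requires a two-parameter homotopy, and one must verify that the intermediate configurations remain in $X(\Omega, T)$. Here the convexity of $\Omega$ (being a triangle) is crucial, as it guarantees that the codimension-one walls in parameter space, where a triangle degenerates, can be navigated around during the folding. It is exactly this feature that fails for non-convex $\Omega$, and its failure is what gives rise to the non-trivial homotopy groups in the main theorem.
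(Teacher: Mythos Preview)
The proposal has a genuine gap at its central step. Decomposing a loop into single-vertex moves is routine, and your observation that the admissible region $S_v$ for a single vertex $v$ (with the others frozen) is the kernel of its link polygon is correct --- in fact $S_v$ is an intersection of open half-planes, hence \emph{convex}, not merely star-shaped. But the contraction in Step~3 is not established. ``Folding two adjacent single-vertex motions into one combined motion'' is undefined when the two motions involve distinct vertices $u$ and $w$: the result is no longer a single-vertex motion, so the induction on the number of pieces does not progress. What one actually needs is a commutation lemma --- that a move of $u$ followed by a move of $w$ is homotopic rel endpoints, through configurations in $X(\Omega,T)$, to a move of $w$ followed by a move of $u$. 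When $u$ and $w$ are adjacent this is exactly the two-parameter problem you flag, since $S_u$ varies with the position of $w$; you invoke convexity of $\Omega$ but give no mechanism by which it controls these moving walls. Without that lemma the argument does not close, and local convexity of each $S_v$ is by itself far from sufficient: Bing--Starbird's disconnected examples for non-convex $\Omega$ already show that per-vertex star-shapedness does not force even path-connectedness, let alone simple connectivity.

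For comparison, the paper does not prove Ho's theorem directly; it is quoted as a cited result. Instead the paper establishes the stronger Theorem~3.5 (contractibility of $X(\Omega,T)$ for every convex polygon $\Omega$) by an entirely different, global route: the Tutte map $\Psi\colon W(\Omega,T)\to X(\Omega,T)$ from the convex space of permissible edge-weights is surjective, and Floater's mean value coordinates give a continuous section $\sigma$ with $\Psi\circ\sigma=\mathrm{id}$. Hence $X(\Omega,T)$ is a retract of a convex set and is contractible. This bypasses any vertex-by-vertex induction; convexity of $\Omega$ enters cleanly and only once, through Theorem~2.1, which guarantees that every permissible weight produces an embedding.
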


Bloch, Connelly, and Henderson \cite{bloch1984space} extended these results to general convex polygons and further proved the contractibility of the space of simplexwise linear homeomorphisms of a convex polygon. In a recent paper, Cerf \cite{1910.00240} improved the original argument in \cite{bloch1984space}.

\begin{theorem}[Bloch, Connelly, and Henderson \cite{bloch1984space}]
If $\Omega$ is a convex polygon with a triangulation $T$ in the plane, then $X(\Omega, T)$ is homeomorphic to $\mathbb{R}^{2|V_I|}$.
\end{theorem}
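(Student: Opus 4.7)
The plan is to show that $X(\Omega, T)$ is a star-shaped open subset of $\mathbb{R}^{2|V_I|}$; this will imply the claimed homeomorphism, since any star-shaped open subset of $\mathbb{R}^{N}$ is homeomorphic to $\mathbb{R}^{N}$ via radial rescaling along rays from the center.

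The distinguished center will be the Tutte barycentric embedding $\psi_0$ associated with a fixed choice of positive weights on the edges of $T$. Since $\Omega$ is convex, Tutte's embedding theorem guarantees that the unique solution to the barycentric equilibrium equations is a valid geodesic triangulation, so $\psi_0 \in X(\Omega, T)$. The main step is then to verify, for every $\psi \in X(\Omega, T)$ and every $t \in [0, 1]$, that the linear interpolation
\[
\psi_t = (1-t)\,\psi_0 + t\,\psi
\]
remains a valid geodesic triangulation.

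The verification amounts to checking that every triangle $\sigma$ of $T$ maintains strictly positive signed area along the deformation. Writing $A_\sigma(\psi_t)$ for this signed area, $A_\sigma(\psi_t)$ is a quadratic polynomial in $t$ that is strictly positive at $t=0$ (since $\psi_0$ is valid) and at $t=1$ (since $\psi$ is valid). I would rule out a zero in $(0, 1)$ by exploiting the equilibrium property of $\psi_0$ — each interior vertex sits at a convex combination of its neighbors — together with the convexity of $\Omega$, in order to bound the cross term in $A_\sigma(\psi_t)$. A cleaner alternative, should the straight-line argument turn out to be too rigid, is to reparametrize $X(\Omega, T)$ through weight space: for each $\psi \in X(\Omega, T)$ produce positive Tutte weights realizing $\psi$, then contract those weights linearly to the weights defining $\psi_0$. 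Tutte's theorem ensures that every intermediate weight assignment yields a point of $X(\Omega, T)$, giving a continuous contraction of $X(\Omega, T)$ onto $\psi_0$.

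The main obstacle is the sign analysis for $A_\sigma(\psi_t)$ along the straight-line deformation. Convex combinations of arbitrary valid embeddings need not be valid, so one cannot appeal to convexity of $X(\Omega, T)$ directly; the argument must genuinely use that the basepoint $\psi_0$ is the Tutte equilibrium and that $\Omega$ is convex. Moreover, upgrading a bare contractibility statement to a homeomorphism with $\mathbb{R}^{2|V_I|}$ is non-trivial in general (witness the Whitehead manifold in dimension three), so establishing star-shapedness, rather than only contractibility, is essential: it is what gives the radial parametrization and hence the explicit homeomorphism with Euclidean space.
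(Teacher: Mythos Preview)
Your primary route has a real gap: you never establish that the straight-line interpolation $\psi_t=(1-t)\psi_0+t\psi$ stays an embedding, and the hope that ``equilibrium at $\psi_0$ plus convexity of $\Omega$ bounds the cross term in $A_\sigma(\psi_t)$'' is not a proof. The signed area of each triangle is quadratic in $t$, positive at the endpoints, and nothing you have written prevents it from dipping below zero in between. The equilibrium condition says $\psi_0(v_i)=\sum_j w^0_{ij}\psi_0(v_j)$; a valid $\psi$ satisfies $\psi(v_i)=\sum_j w_{ij}\psi(v_j)$ for \emph{different} positive weights $w_{ij}$, and $(1-t)\sum_j w^0_{ij}\psi_0(v_j)+t\sum_j w_{ij}\psi(v_j)$ is generally \emph{not} a convex combination of the $\psi_t(v_j)$, so you cannot invoke Floater's theorem along the homotopy. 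This is exactly why morphing results in the literature (Floater--Gotsman, Surazhsky--Gotsman) interpolate in weight space rather than in coordinate space: star-shapedness of $X(\Omega,T)$ from the Tutte center in $\mathbb{R}^{2|V_I|}$ is not known and is not what Bloch--Connelly--Henderson prove.

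Your ``cleaner alternative'' through weight space is essentially what the paper does. The paper builds a continuous section $\sigma:X(\Omega,T)\to W(\Omega,T)$ via mean value coordinates, observes that $\Psi\circ\sigma=\mathrm{id}$, and uses convexity of $W(\Omega,T)$ to conclude that $X(\Omega,T)$ is a retract of a convex set, hence contractible. You correctly diagnose the limitation: this yields only contractibility, not a homeomorphism with $\mathbb{R}^{2|V_I|}$, and the Whitehead-type worry is legitimate in dimensions $\ge 3$. The paper does not close this gap either---its Theorem~3.5 is stated as contractibility, and the full homeomorphism is left to the original Bloch--Connelly--Henderson argument, which proceeds by an entirely different (and much longer) inductive scheme. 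So as a proof of the statement as written, both your proposal and the paper's new argument fall short of the same upgrade; the difference is that the paper claims only what it proves.
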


Theorem 1.5 can be regarded as a discrete version of the classical theorem due to Smale \cite{smale1959diffeomorphisms} stating that the group of diffeomorphisms of the 2-disk fixing the boundary pointwise is contractible. As pointed out in \cite{bloch1984space}, Theorem 1.5 leads to an alternative proof of Smale's theorem.

Bing and Starbird \cite{bing1978linear} considered the more general case of star-shaped polygons. A \textit{dividing edge} in a triangulation $T$ is an interior edge connecting two boundary vertices. 

\begin{theorem}[Bing and Starbird \cite{bing1978linear}]
If $\Omega$ is a star-shaped polygon with a triangulation $T$ in the plane, and $T$ does not contain any dividing edge,  then $X(\Omega, T)$ is non-empty and path-connected.
\end{theorem}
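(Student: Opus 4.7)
The plan is to handle non-emptiness and path-connectedness separately, anchored by a star point $p$ in the kernel of $\Omega$.

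For non-emptiness, I would use a Tutte-style embedding adapted to star-shaped domains. Assign positive weights $w_{uv}$ to each interior edge of $T$ and solve the linear system placing each interior vertex $v$ at the convex combination $\sum_{u \sim v} w_{uv} \psi(u) / \sum_{u \sim v} w_{uv}$ of its neighbors, with boundary vertices fixed at their prescribed positions. When $\Omega$ is convex, Tutte's theorem guarantees this produces a straight-line embedding. In the star-shaped case, the same argument can be extended using $p$ as a reference point: the absence of dividing edges rules out degenerate configurations in which interior vertices collapse onto a chord of $\Omega$, and the star-shaped hypothesis ensures that the resulting convex-combination positions remain inside $\Omega$.

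For path-connectedness, given $\psi^0, \psi^1 \in X(\Omega, T)$, I would connect each to a configuration supported in a small neighborhood of $p$ and then invoke Theorem 1.5. Define the homothety family $\psi_s(v) = p + s(\psi(v) - p)$ for interior vertices, with boundary vertices fixed. The key claim is that $\psi_s \in X(\Omega, T)$ for every $s \in (0, 1]$. Assuming this, for small $\varepsilon > 0$ the configuration $\psi_\varepsilon$ has all interior vertices clustered in a convex disk $D$ around $p$. Inside $D$, together with the rays from $p$ to the boundary vertices (which are pairwise disjoint because $p$ lies in the kernel), one may connect the two contracted configurations by invoking Theorem 1.5 on an auxiliary convex domain containing $D$ and the boundary vertices.

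The main obstacle is verifying that $\psi_s$ remains a valid geodesic triangulation for every $s \in (0, 1]$. Interior-interior edges scale uniformly about $p$ and hence preserve non-crossings among themselves. The delicate case is an interior-boundary edge $(v, b)$, whose boundary endpoint stays fixed while its interior endpoint moves radially towards $p$, and which could in principle cross other edges at intermediate values of $s$. My plan is to analyze the cyclic order of edges incident to each vertex as viewed from $p$ and show this order is preserved throughout the homotopy, ruling out the appearance of new crossings. The no-dividing-edge hypothesis is crucial here: a dividing edge would remain entirely fixed during the contraction, creating a rigid obstruction against which neighboring contracting edges could collide; its absence allows every edge either to scale homothetically about $p$ or to sweep continuously toward a ray from $p$. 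Rigorously establishing this angular-order preservation, and checking that it interacts correctly with the combinatorial structure of $T$, will be the technical heart of the argument.
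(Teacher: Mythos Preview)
The paper does not give its own proof of this statement: Theorem~1.6 is quoted from Bing and Starbird, and the paper only remarks that their argument, like the other results surveyed in the introduction, proceeds by induction on the combinatorics of $T$. So there is no proof in the paper to compare against directly, but your proposal has genuine gaps that would need to be closed before it could stand as an alternative.

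For non-emptiness, Tutte's method and Floater's extension (Theorem~2.1) genuinely require convexity of the target polygon. The discrete maximum principle only forces each interior vertex into the convex hull of the boundary vertices; when $\Omega$ is non-convex that hull is strictly larger than $\Omega$, and nothing prevents the solution from placing vertices outside $\Omega$ or from producing triangles with reversed orientation. Your sentence ``the star-shaped hypothesis ensures that the resulting convex-combination positions remain inside $\Omega$'' is not an argument; it simply restates what must be proved. Extending Tutte-type embedding theorems beyond convex boundaries is a nontrivial matter (cf.\ the papers of Gortler--Gotsman--Thurston cited here), and the no-dividing-edge hypothesis alone does not make the linear system behave.

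For path-connectedness, the homothety $\psi_s$ need not remain in $X(\Omega,T)$. As an interior vertex $v$ slides toward $p$ while its boundary neighbor $b$ stays fixed, the edge $vb$ sweeps out the triangle with vertices $p,v,b$. Star-shapedness gives $pv,\,pb\subset\Omega$, but it does \emph{not} force the whole triangle into $\Omega$; a reflex portion of $\partial\Omega$ can protrude into that triangle, so the moving edge may exit $\Omega$ at some intermediate $s$. Independently, two interior--boundary edges can cross during the homotopy even if both stay inside $\Omega$, and ``angular order as viewed from $p$'' does not control this because those edges are not radial through $p$. Finally, your endgame invokes Theorem~1.5 on ``an auxiliary convex domain containing $D$ and the boundary vertices,'' but the boundary vertices are those of the original non-convex $\Omega$ in their original cyclic order; no convex polygon has them as its vertex cycle, so Theorem~1.5 is not available. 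Bing and Starbird's inductive proof sidesteps all of these geometric obstructions by reducing to simpler triangulations rather than by deforming a fixed one.
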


Bing and Starbird \cite{bing1978linear} also showed that $X(\Omega, T)$ is not necessarily path-connected if the boundary is not star-shaped. 

All the results above were proved using induction. In Theorem 3.5, we will provide a constructive proof  based on Tutte's embedding theorem. On the other hand, Theorem 1.1 shows that this result does not extend to $X(\Omega, T)$ for non-convex polygons.

\noindent\textbf{Organization of this paper. } In Section 2, we recall Tutte's embedding theorem and its generalizations. In Section 3, we give a new proof of the  contractibility of $X(\Omega, T)$ when $\Omega$ is convex, based on Tutte's method. In Section 4, we prove Theorem 1.1. In Section 5, we discuss some conjectures about the homotopy types of spaces of geodesic triangulations of general surfaces.

\noindent\textbf{Acknowledgement.} This work is in partially supported by the NSF Grant DMS-1719582. The author would like to thank his advisor, Professor Joel Hass, for suggesting this problem, insightful discussions, and constant encouragement.

\section{Tutte's embedding and its generalization}
\subsection{Tutte's embedding for the disk}

Given a triangulation $T = (V, E, F)$ of the 2-disk with vertices $V$, edges $E$ and faces $F$,  the $1$-skeleton of $T$ is a planar graph. Tutte \cite{tutte1963draw} provided a constructive method to generate a straight-line embedding of a 3-vertex-connected planar graph shown in Figure 2. The procedure starts by setting one face of the graph as a convex polygon, then solves for the coordinates of the other vertices with a system of linear equations. 

Using a discrete maximum principle, Floater \cite{floater2003one} extended Tutte's result for the case of triangulations of the 2-disk.

\begin{figure}[h!]
  \includegraphics[width=0.5\linewidth]{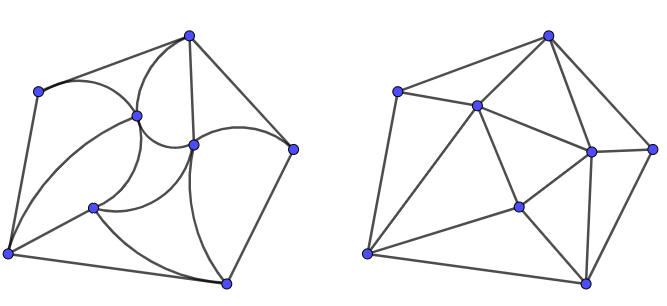}
  \caption{Tutte's embedding}
\end{figure}
\begin{theorem}[Floater \cite{floater2003one}]
	Assume $T = (V, E, F)$ is a triangulation of a convex $n$-sided polygon $\Omega$, and $\psi$ is a simplexwise linear homeomorphism from $T$ to $\mathbb{R}^2$. If  $\psi$ maps every interior vertex in $T$ into the convex hull of the images of its neighbors, and maps the cyclically ordered boundary vertices $v_1, v_2, \cdots, v_n$ of $T$ to the cyclically ordered vertices of $\Omega$, then $\psi$ is one to one.
\end{theorem}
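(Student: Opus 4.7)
The plan is to invoke a \emph{discrete maximum principle}. For any affine functional $\ell:\mathbb{R}^2\to\mathbb{R}$, define the vertex function $h(v):=\ell(\psi(v))$ and extend it linearly on each triangle. The convex-hull hypothesis says that for every interior vertex $v$ with neighbors $v_1,\ldots,v_k$ there exist nonnegative weights $\lambda_1,\ldots,\lambda_k$ summing to $1$ with $\psi(v)=\sum_j \lambda_j \psi(v_j)$; applying $\ell$ yields $h(v)=\sum_j \lambda_j h(v_j)$. Therefore $h(v)\le \max_j h(v_j)$, and dually $h(v)\ge \min_j h(v_j)$, so no interior vertex is a strict local extremum of $h$. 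Since linear extension onto triangles preserves extrema at vertices, the extrema of $h$ on $T$ are attained only on the boundary.

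Using this principle I would analyze the combinatorial link of each interior vertex $v$. The goal is to show that, listing the neighbors $v_1,\ldots,v_k$ in the cyclic order given by $T$, the images $\psi(v_1),\ldots,\psi(v_k)$ wind exactly once around $\psi(v)$. To this end, for any line $L$ through $\psi(v)$ one counts sign changes of $\ell(\psi(v_j))-\ell(\psi(v))$ around the cycle, where $\ell$ is a linear functional vanishing on $L$: the maximum principle forces at least two sign changes (else $v$ would be a strict local extremum of $h$), and the convex-hull relation rules out four or more (a four-sign-change configuration would prevent $\psi(v)$ from lying in the convex hull of $\{\psi(v_j)\}$). Having exactly two sign changes for every direction forces the link polygon $\psi(v_1),\ldots,\psi(v_k)$ to be a simple closed curve winding once around $\psi(v)$, so $\psi$ restricts to a homeomorphism from the closed star of $v$ onto its image.

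With local injectivity at each interior vertex established, simplexwise linearity gives local injectivity on open edges and triangles as well. The hypothesis that $\psi$ maps $\partial T$ to the cyclically ordered boundary of the convex polygon $\Omega$ implies $\psi|_{\partial T}$ is a homeomorphism onto $\partial\Omega$. Thus $\psi:T\to\overline{\Omega}$ is a proper local homeomorphism restricting to a homeomorphism between the boundaries. A standard covering/degree argument (both $T$ and $\overline{\Omega}$ are closed $2$-disks, and the boundary degree is $1$) then forces $\psi$ to be a global homeomorphism, hence one-to-one.

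The main obstacle is the sign-change count in the second step, specifically the degenerate case where one or more neighbors $\psi(v_j)$ lie on the test line $L$. A brute case analysis can work, but cleaner is to argue by a generic perturbation of the direction of $L$ and then pass to the limit using the closedness of the no-strict-extremum property, together with the fact that one can freely restrict to neighbors with $\lambda_j>0$ when invoking the convex-combination identity. Once this case is handled, the remainder of the argument is essentially formal.
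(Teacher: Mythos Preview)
The paper does not give its own proof of this theorem; it is quoted from Floater \cite{floater2003one} with only the remark that Floater's argument uses a discrete maximum principle. So there is nothing in the paper to compare against beyond that one hint.

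That said, your outline has a genuine gap in the second step. You claim that ``the convex-hull relation rules out four or more'' sign changes of $\ell(\psi(v_j))-\ell(\psi(v))$ around the combinatorial link of an interior vertex $v$. This is false. Take $\psi(v)=0$ and neighbors mapping, in the cyclic order dictated by $T$, to $p_1=(1,\varepsilon)$, $p_2=(-1,-\varepsilon)$, $p_3=(\varepsilon,1)$, $p_4=(-\varepsilon,-1)$ for small $\varepsilon>0$. Then $0=\tfrac14\sum_j p_j$ is a strictly positive convex combination, yet the $y$-coordinates in this cyclic order have signs $+,-,+,-$, i.e.\ four sign changes. The link polygon in that order is non-embedded and $\psi$ fails to be injective on the star of $v$. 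So the convex-combination hypothesis \emph{at $v$ alone} cannot force the link to wind once around $\psi(v)$; your identification of ``the main obstacle'' as merely the degenerate case where some $\psi(v_j)\in L$ misses the real difficulty.

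What makes Floater's argument work is that the maximum principle is applied \emph{globally}, not vertex by vertex. The key step is: if an interior vertex and all of its neighbors lie in a closed half-plane $H$, then propagating the equality case of the maximum principle through the triangulation forces boundary vertices of $\Omega$ onto $\partial H$, and convexity of $\Omega$ then yields a contradiction. From this one deduces that every triangle has non-zero image area; a connectedness argument across shared edges then shows all triangles are mapped with the same orientation, giving local injectivity everywhere. Your final covering/degree paragraph is fine once local injectivity is established, but the route to local injectivity must pass through this global non-degeneracy argument rather than a purely local sign-change count.
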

Theorem 2.1 is a discrete version of the Rado-Kneser-Choquet theorem \cite{duren2004harmonic}, which states that a smooth harmonic map from the unit 2-disk to a convex domain bounded by a Jordan curve in the plane is homeomorphic, when its restriction to the boundary of the 2-disk is homeomorphic. Moreover, it gives a constructive method to produce geodesic triangulations of a convex polygon with the combinatorial type of $T$ as follows. 

\textbf{Step 1.} Assign a positive weight $c_{ij}$ to a directed edge $(i, j) \in \vec{E}$, where $\vec{E}$ is the set of directed edges of $T$. Then normalize the weights to make the sum of all outgoing weights around each interior vertex equal to $1$
$$w_{ij} = \frac{c_{ij}}{\sum_{j\in N(v_i)} c_{ij}}, \quad v_i\in V_I.$$
The set $N(v_i)$ consists of all the vertices that are neighbors of the vertex $v_i\in V_I$. Notice that we don't impose symmetry condition $w_{ij} = w_{ji}$.

\textbf{Step 2.} Fix the coordinates of boundary vertices $V_B$, which together form a  convex $n$-sided polygon $\Omega$,

$$\begin{pmatrix}
x_i \\
y_i
\end{pmatrix}   = \phi(v_i) = 
\begin{pmatrix}
b^x_i \\
b^y_i
\end{pmatrix}, \quad v_i\in V_B.
$$

The coordinates of vertices in $V_B$ are determined by the map $\phi$.

\textbf{Step 3.} Solve the coordinates for interior vertices with boundary coordinates given by Step 2,

    $$\sum_{j\in N(v_i)}w_{ij}\begin{pmatrix}
x_j \\
y_j
\end{pmatrix}
=
\begin{pmatrix}
x_i \\
y_i
\end{pmatrix}, \quad v_i\in V_I.$$

\textbf{Step 4.} Put the vertices in the positions given by these coordinates, and connect the vertices with line segments based on the combinatorics of the triangulation $T$.

Theorem 2.1 states that the result is a geodesic triangulation of $\Omega$ with the combinatorial type of $T$. The linear system in Step 3 implies that the $x$-coordinate (or $y$-coordinate) of one interior vertex is a convex combination of the $x$-coordinates (or $y$-coordinates) of its neighbors.  The coefficient matrix  of this system is not necessarily symmetric but it is diagonally dominant, so the solution exists and is unique. This procedure is called \textit{Tutte's method}.

This method has been generalized to surfaces with non-trivial topologies. Colin de Verdiere \cite{de1991comment} and Hass and Scott \cite{hass2012simplicial} showed that every triangulation of a closed surface with a metric of non-positive curvature can be realized as a geodesic triangulation. Gortler, Gotsman, and Thurston \cite{gortler2006discrete} reproved Tutte's theorem using \textit{discrete one forms} and generalized it to the cases of flat tori and multiple-connected polygonal regions, with appropriate assumptions on the boundaries. Aigerman and Lipman \cite{aigerman2015orbifold} further extended this method to Euclidean orbifolds with spherical topology.

\section{Geodesic Triangulations of the 2-Disk with Convex Boundary}

In this section, we present a short proof of the contractibility of the space of geodesic triangulations on a convex polygon, based on Tutte's method. Let us consider the topology of the space $X(\Omega, T)$ where $\Omega$ is a fixed convex polygon in the plane. Let $E_I$ be the set of interior edges in $T$ and $E_B$ be the set of boundary edges in $T$. 

\begin{definition}
Assume $\Omega$ is a convex polygon with a triangulation $T$. A collection of weights, defined by a $|V_I|\times |V|$ matrix $(w_{ij})$, is \textit{permissible} if 
\begin{itemize}

\item $w_{ii} = 1$ for all $i = 1, 2, \cdots, |V_I|$;
	\item $w_{ij} = 0$ if $v_i$ is not connected to $v_j$;
	\item $w_{ij} > 0$ if $v_i$ is connected to $v_j$;
	\item $\sum_{j\in N(v_i)}w_{ij} = 1$ for each interior vertex $v_i$. 

\end{itemize}
Define $W(\Omega, T)$ to be  the space of permissible weights of $(\Omega, T)$. 
\end{definition}

%Notice that a permissible weight produces a coefficient matrix in Step 3 of  Tutte's method described in Section 2. 

\begin{definition}
The \textit{Tutte map} $\Psi: W(\Omega, T) \to X(\Omega, T)$ sends a collection of permissible weights $(w_{ij})$ in $W(\Omega, T)$ to the unique geodesic triangulation $\tau \in X(\Omega, T)$ determined by the solution to the linear system in Step 2 and Step 3 of Tutte's method, with coefficients $(w_{ij})$ and boundary vertices of $\Omega$ determined by $\phi$.
\end{definition}

The space $W(\Omega, T)$ is a $2|E_I| -|V_I|$ dimensional manifold.  The range $X(\Omega, T)$ is a $2|V_I|$ dimensional manifold. One can deduce that $|E_I| - 3|V_I| = |E_B| - 3$. Hence the dimension of $W(\Omega, T)$ is not less than the dimension of $X(\Omega, T)$, with equality when the boundary is a triangle.  
\begin{lemma}
	The Tutte map $\Psi$ is continuous and surjective from $W(\Omega, T)$ to $X(\Omega, T)$.
\end{lemma}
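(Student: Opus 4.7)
My plan is to prove continuity as a linear-algebra fact and surjectivity via a geometric construction at each interior vertex.

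For continuity, I would rewrite Step 3 of Tutte's method in matrix form. Splitting each sum $\sum_{j \in N(v_i)} w_{ij} p_j = p_i$ according to whether $v_j$ is interior or boundary gives a block equation $(I - W_I) p_I = W_B p_B$, where $p_I$ and $p_B$ collect the interior and boundary coordinates, and $W_I, W_B$ are the corresponding weight submatrices. The matrix $I - W_I$ is weakly diagonally dominant, with strict dominance in each row whose interior vertex has a boundary neighbor. Using the connectedness of $T$, every component of the subgraph induced on $V_I$ contains at least one such vertex, so $I - W_I$ is invertible by the irreducibly diagonally dominant criterion. Hence $p_I = (I - W_I)^{-1} W_B p_B$ depends rationally on the weights with non-vanishing denominator, and $\Psi$ is continuous.

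For surjectivity, fix $\tau \in X(\Omega, T)$ and write $p_i = \psi(v_i)$. The key geometric step is to show that, for each interior vertex $v_i$, the point $p_i$ lies in the \emph{interior} of the convex hull $H_i$ of its neighbors $\{p_j : v_j \in N(v_i)\}$. Because $\tau$ is an embedded triangulation, the star of $v_i$ is a polygonal disk whose boundary link is a simple closed polygonal curve $\gamma_i$ passing through the $p_j$'s, and $p_i$ lies in the open region $P_i$ bounded by $\gamma_i$. Since every simple polygon is contained in the convex hull of its vertices, $P_i \subseteq H_i$; taking topological interiors in $\mathbb{R}^2$ preserves this inclusion, so $p_i \in \mathrm{int}(P_i) \subseteq \mathrm{int}(H_i)$. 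A point in the interior of a planar convex polygon admits a strictly positive convex-combination representation in terms of its vertices, yielding weights $w_{ij} > 0$ with $\sum_j w_{ij} = 1$ and $p_i = \sum_{j \in N(v_i)} w_{ij} p_j$.

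Assembling these weights for all $v_i \in V_I$, together with the conventions $w_{ii}=1$ and $w_{ij}=0$ for non-adjacent pairs, produces a permissible weight matrix $(w_{ij}) \in W(\Omega, T)$. The assignment $v_k \mapsto p_k$ satisfies the Tutte linear system for these weights by construction, so by uniqueness of its solution, $\Psi((w_{ij})) = \tau$. I expect the main obstacle to be the geometric step $p_i \in \mathrm{int}(H_i)$; the linear algebra for continuity and the extraction of a strictly positive convex combination from an interior point are both routine once this inclusion is in hand.
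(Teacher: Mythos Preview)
Your argument is correct. For continuity you give the same diagonal-dominance reasoning the paper sketches, just with more detail; there is no real difference there.

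For surjectivity the approaches diverge. You argue abstractly: the link of an interior vertex is a simple closed polygon containing $p_i$ in its interior, hence $p_i$ lies in the interior of the convex hull of the neighbors, and any interior point of a planar convex hull admits a strictly positive convex combination of the generating points. This is valid and elementary. The paper instead writes down an explicit formula, Floater's \emph{mean value coordinates}
\[
w_{ij} = \frac{c_{ij}}{\sum_{k\in N(v_i)} c_{ik}},\qquad c_{ij} = \frac{\tan(\alpha^j_{i-1}/2)+\tan(\alpha^j_i/2)}{\|v_i-v_j\|},
\]
which produces a specific permissible weight for each $\tau$. The payoff of the paper's choice is that this assignment is a \emph{smooth section} $\sigma:X(\Omega,T)\to W(\Omega,T)$ of $\Psi$, and that section is exactly what drives the contractibility proof in Theorem~3.5 (one homotopes $\sigma\circ\Psi$ to the identity inside the convex set $W$). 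Your existence argument establishes the lemma as stated, but it does not by itself yield a continuous right inverse; if you later need Theorem~3.5, you would have to revisit this step and make your choice of weights depend continuously on $\tau$.
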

\begin{proof}
	By Theorem 2.1, for any $(w_{ij})\in W(\Omega, T)$, the solution to the linear system generates a geodesic triangulation of $T$, so $\Psi$ is well-defined. The continuity follows from the continuous dependence on the coefficients of the solutions to the linear system. To show surjectivity, given a geodesic triangulation $\tau\in X(\Omega, T)$, any interior vertex $v_i$ in $\tau$ is in the convex hull of its neighbors. Then we can construct weights $(w_{ij})$ for a geodesic triangulation $\tau$ using the \textit{mean value coordinates} defined in \cite{floater2003mean} below.
	
\begin{figure}[h!]
  \includegraphics[width=0.3\linewidth]{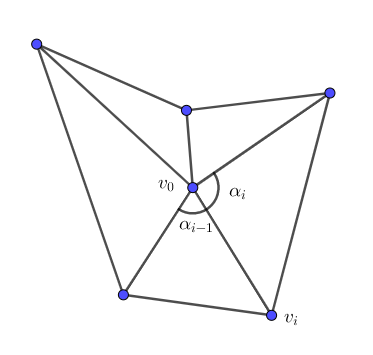}
  \caption{The mean value coordinate at $v_0$}
\end{figure}

	The mean value coordinates on the directed edges of a geodesic triangulation are given by 
$$w_{ij} = \frac{c_{ij}}{\sum_{j\in N(v_i)} c_{ij}}, \text{where} \quad c_{ij} = \frac{\tan (\alpha^j_{i-1}/2) +\tan (\alpha^j_i/2) }{||v_i - v_j||}, \quad (i, j)\in \vec{E}.$$ 
The two angles $\alpha^j_{i-1}$ and $\alpha^j_i$ at $v_i$ share the edge $(i, j)\in \vec{E}$ in the Figure 3. The mean value coordinates provide a smooth map from $X(\Omega, T)$ to $W(\Omega, T)$.
\end{proof}

Floater \cite{floater1999morph} proposed another construction of weights by taking the average of barycentric coordinates. An alternative method is to take the center of mass of the space of weights $(w_{ij})\in W$ such that $\Psi((w_{ij})) = \tau$. All three methods agree with the barycentric coordinates of a vertex when the star of this vertex is a triangle. 

\begin{definition}
The map $\sigma: X(\Omega, T) \to W(\Omega, T)$ sends a geodesic triangulation $\tau$ to a permissible weight $(w_{ij})$ in $W(\Omega, T)$ determined by the mean value coordinates.
\end{definition}

\begin{theorem}
If $\Omega$ is a convex polygon in $\mathbb{R}^2$ with a triangulation $T$, the space of geodesic triangulations $X(\Omega, T)$ is contractible. 
\end{theorem}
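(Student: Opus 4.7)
The plan is to exhibit $X(\Omega, T)$ as a retract of the contractible space $W(\Omega, T)$ via the two maps already introduced. First, I would observe that $W(\Omega, T)$ is contractible: for each interior vertex $v_i$, the constraint $w_{ij}>0$ for $j\in N(v_i)$ together with $\sum_{j\in N(v_i)} w_{ij}=1$ carves out an open simplex, and $W(\Omega,T)$ is the product of these simplices over all interior vertices. Being convex, it is contractible by a straight-line homotopy to any chosen basepoint $w_0\in W(\Omega, T)$.

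The main step is to verify the identity $\Psi\circ \sigma = \operatorname{id}_{X(\Omega, T)}$. Given $\tau\in X(\Omega, T)$, the mean value weights $\sigma(\tau)=(w_{ij})$ are constructed precisely so that each interior vertex of $\tau$ is the $(w_{ij})$-weighted convex combination of its neighbors; equivalently, the coordinates of $\tau$ satisfy the linear system in Step 3 of Tutte's method with coefficients $\sigma(\tau)$. Since that coefficient matrix is (strictly) diagonally dominant, the system has a unique solution, so $\Psi(\sigma(\tau))=\tau$. Combined with Lemma 3.3 and the continuity of $\sigma$ noted after Definition 3.4, this shows $\sigma$ is a continuous section of $\Psi$, so $X(\Omega, T)$ is a retract of $W(\Omega, T)$.

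To conclude, I would invoke the standard fact that a retract of a contractible space is contractible. Explicitly, letting $H\colon W(\Omega, T)\times [0,1]\to W(\Omega, T)$ be the straight-line contraction with $H_0=\operatorname{id}$ and $H_1\equiv w_0$, the map
\[
G_t \;=\; \Psi\circ H_t\circ \sigma\colon X(\Omega, T)\to X(\Omega, T)
\]
satisfies $G_0=\Psi\circ\sigma=\operatorname{id}_{X(\Omega, T)}$ and $G_1\equiv \Psi(w_0)$, giving the desired contraction.

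The only nontrivial ingredient is the retraction identity $\Psi\circ\sigma=\operatorname{id}$; everything else is either convexity of $W(\Omega,T)$ or the uniqueness part of Tutte's theorem, both already available. I expect the mild subtlety to be articulating cleanly why $\tau$ automatically solves the Tutte system associated to $\sigma(\tau)$, which is essentially the definition of mean value coordinates.
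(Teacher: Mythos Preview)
Your proposal is correct and follows essentially the same approach as the paper: both use that $\sigma$ is a continuous section of $\Psi$ (i.e., $\Psi\circ\sigma=\operatorname{id}_{X(\Omega,T)}$) together with the convexity of $W(\Omega,T)$, and then push the straight-line homotopy in $W(\Omega,T)$ through these maps. The only cosmetic difference is that the paper phrases the conclusion as a homotopy equivalence $X(\Omega,T)\simeq W(\Omega,T)$ via the linear isotopy $(1-t)\sigma\circ\Psi+t\mathbf{1}$, whereas you write down the contraction $G_t=\Psi\circ H_t\circ\sigma$ of $X(\Omega,T)$ directly; the underlying argument is identical.
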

\begin{proof}
	The map $\sigma$ is continuous, and $\Psi(\sigma(\tau)) = \tau$ for any $\tau \in X(\Omega, T)$, so the map $\sigma$ is a global section of $\Psi$ from $X(\Omega, T)$ to $W(\Omega, T)$. Since $W(\Omega, T)$ is convex, there exists a linear isotopy $(1 - t)\sigma\circ\Psi + t\mathbf{1}$ between the identity map $\mathbf{1}$ on $W(\Omega, T)$ and $\sigma\circ\Psi$. Hence $X(\Omega, T)$ is homotopy equivalent to the convex space $W(\Omega, T)$, hence it is contractible. 
\end{proof}

We can extend this result to spaces of geodesic triangulations of convex polygons in other geometries of constant curvature. 

\begin{corollary}
Assume $\Omega$ is a hyperbolic convex polygon, or a spherical convex polygon contained in an open hemisphere, and $T$ is a triangulation of $\Omega$. Then the space of geodesic triangulations $X(\Omega, T)$ is contractible. 
\end{corollary}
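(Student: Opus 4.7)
The plan is to reduce both cases to Theorem 3.5 by choosing a model in which the non-Euclidean geodesics become Euclidean straight line segments, so that the respective spaces of geodesic triangulations are literally identified. The key point is that Tutte's method, as used in Theorem 3.5, is a Euclidean construction, but it is insensitive to the ambient metric provided geodesics are represented by straight lines.

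For the hyperbolic case, I would use the Klein (projective) disk model of $\mathbb{H}^2$, in which hyperbolic geodesics are represented by Euclidean chords of the open unit disk $D \subset \mathbb{R}^2$. Under this identification a hyperbolic convex polygon $\Omega$ corresponds to a Euclidean convex polygon $\Omega' \subset D$, and a map of the $1$-skeleton of $T$ sending each edge to a hyperbolic geodesic segment corresponds to a map sending each edge to a Euclidean line segment in $D$. Thus the Klein model induces a homeomorphism $X(\Omega, T) \cong X(\Omega', T)$, and Theorem 3.5 gives contractibility of the right-hand side.

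For the spherical case, I would use the gnomonic (central) projection from the open hemisphere containing $\Omega$ to a tangent plane at its pole. This is a diffeomorphism onto $\mathbb{R}^2$ carrying great-circle arcs bijectively to Euclidean line segments. The image $\Omega'$ is a Euclidean convex polygon, and as in the hyperbolic case the projection induces a homeomorphism $X(\Omega, T) \cong X(\Omega', T)$, so Theorem 3.5 applies directly.

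The main subtlety I anticipate is verifying that convexity is preserved in both reductions: hyperbolic convexity of $\Omega$ corresponds to Euclidean convexity of $\Omega'$ in the Klein model, and spherical convexity inside an open hemisphere corresponds to Euclidean convexity of the gnomonic image. Both statements are classical, following from the fact that geodesics map to Euclidean lines and hence half-spaces map to half-spaces, but they should be recorded explicitly. Once these are in hand, the identification of geodesic triangulation spaces is essentially tautological and the corollary follows at once from Theorem 3.5.
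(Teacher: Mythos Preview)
Your proposal is correct and follows essentially the same approach as the paper: use the Klein model for the hyperbolic case and the gnomonic projection for the spherical case to identify $X(\Omega,T)$ with the space of Euclidean geodesic triangulations of a convex Euclidean polygon, then invoke Theorem~3.5. Your remark about verifying preservation of convexity is a point the paper leaves implicit, so your write-up is, if anything, slightly more careful.
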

\begin{proof}
For a hyperbolic convex polygon $\Omega_H$, we embed it in the Klein disk model of the hyperbolic plane so that all the edges of $\Omega_H$ are line segments with respect to the Euclidean metric, inducing a convex polygon $\Omega$ in the Euclidean plane. There is a homeomorphism between the space $X(\Omega_H, T)$ and $X(\Omega, T)$, induced by the identity map of the disk. Hence the space of hyperbolic geodesic triangulations $X(\Omega_H, T)$ is contractible.

Similarly, if $\Omega_S$ is a spherical convex polygon contained in a hemisphere, we can apply the \textit{gnomonic transformation} from the center of the 2-sphere to the plane tangent to the center of the hemisphere containing $\Omega_S$. Then $\Omega_S$ is mapped to a convex polygon $\Omega$ in this plane under the gnomonic transformation. This transformation preserves the incidence and maps geodesic arcs in hemisphere to line segments in $\Omega$. Hence it induces a homeomorphism between $X(\Omega_S, T)$ and $X(\Omega, T)$.
\end{proof}

\section{Spaces of Geodesic Triangulations with non-trivial Topology}
In this section, we construct examples of spaces of geodesic triangulations with non-trivial $n$-th homotopy groups for each $n>0$. We first describe the building block of these constructions.

\subsection{The building block polygon $\mathcal{P}$}	
The building block is the polygon $\mathcal{P}$  in the Figure 4. The triangulation $T$ of $\mathcal{P}$ is given in Figure 4 with three interior vertices $P$, $L$, and $R$. For simplicity, in the remaining part of this paper, we only draw a part of the triangulation shown in Figure 4(B) instead of the full triangulation shown in Figure 4(A). Notice that we can add edges back to produce the full triangulation, once the positions of the interior vertices are fixed.

\begin{figure}[H]
  \begin{subfigure}{0.5\textwidth}
    \includegraphics[width=\linewidth]{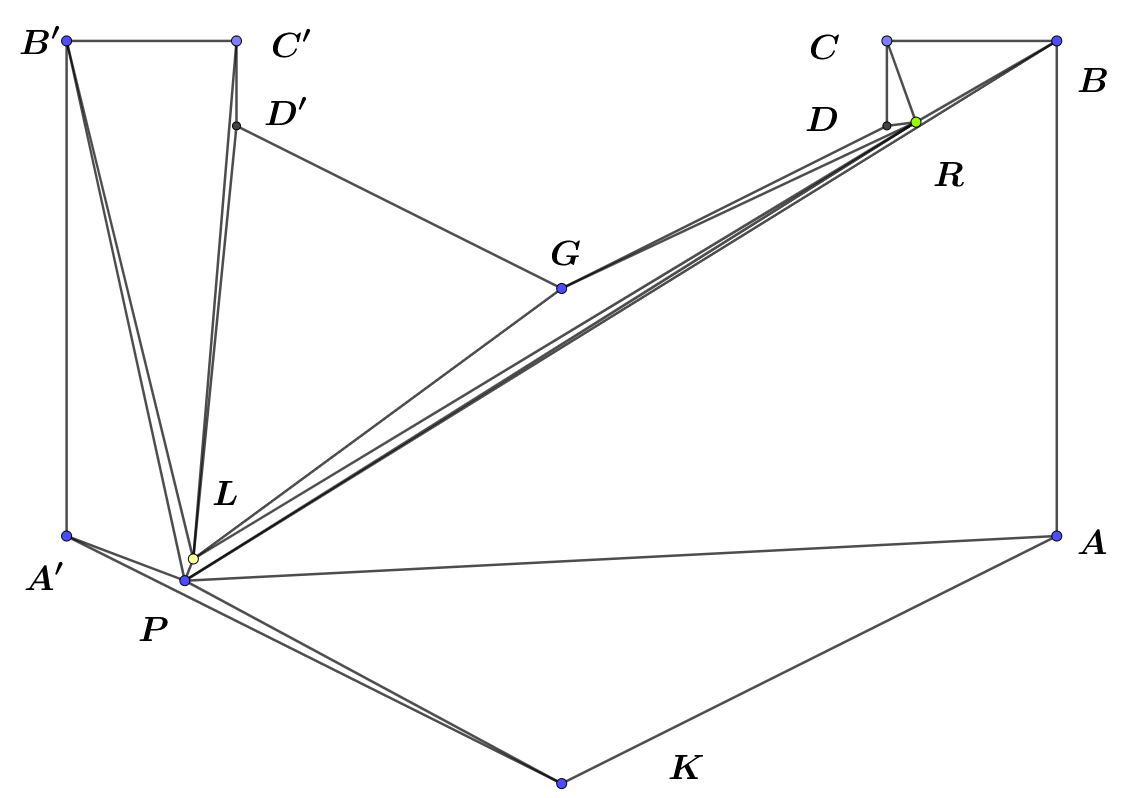}
    \caption{Triangulation of $\mathcal{P}$} \label{fig:1b}
  \end{subfigure}%
  \hspace*{\fill}   % maximizeseparation between the subfigures
  \begin{subfigure}{0.5\textwidth}
    \includegraphics[width=\linewidth]{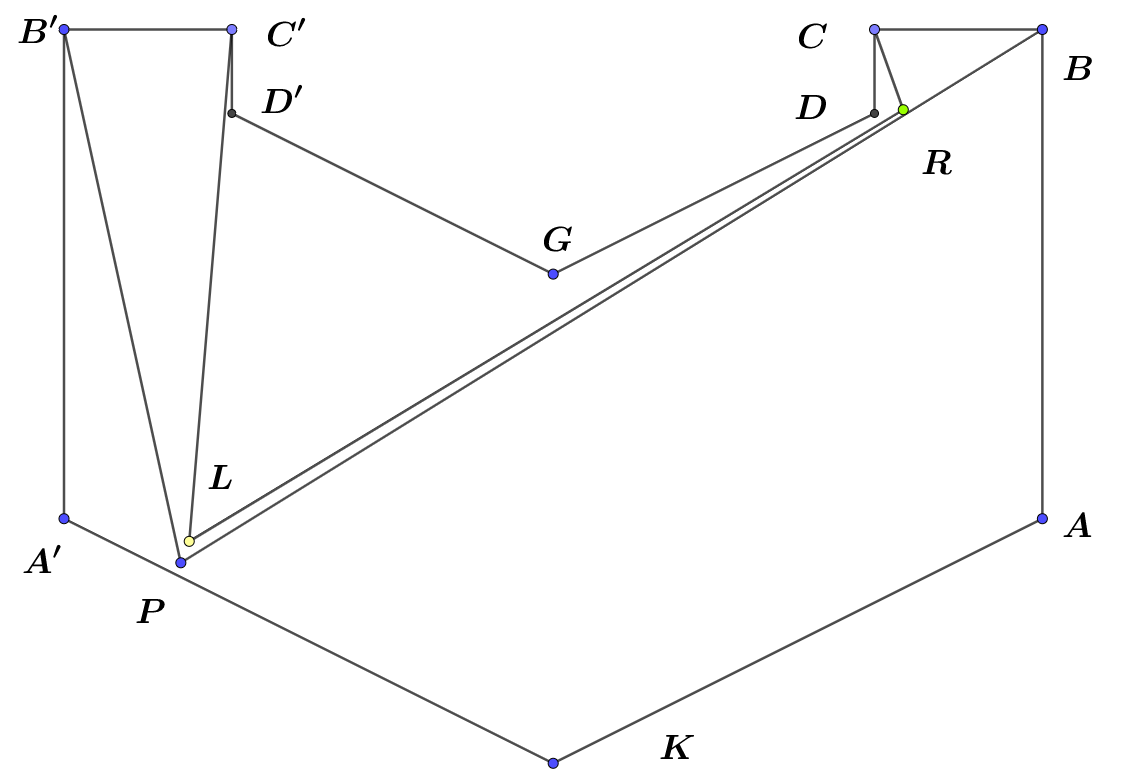}
    \caption{Part of the triangulation of $\mathcal{P}$.} \label{fig:1c}
  \end{subfigure}

\caption{The polygon $\mathcal{P}$ and its triangulation.} \label{fig:1}
\end{figure}

Set $c = (2+\sqrt{2})/(3+\sqrt{2})$. The coordinates of boundary vertices of $\mathcal{P}$ are 
$$A = \begin{bmatrix}1\\0 \end{bmatrix}, B = \begin{bmatrix}1\\1 \end{bmatrix}, C = \begin{bmatrix}c\\1 \end{bmatrix}, D = \begin{bmatrix} c\\ (c+1)/2 \end{bmatrix},  G = \begin{bmatrix} 0\\ 1/2 \end{bmatrix},  K = \begin{bmatrix}0\\-1/2 \end{bmatrix}.$$
Reflect the vertices $A$, $B$, $C$, and $D$ about $y$-axis to determine the remaining boundary vertices $A'$, $B'$, $C'$, and $D'$.

The idea of the construction of $\mathcal{P}$ originates from the example given by Bing and Starbird \cite{bing1978linear}. Let us consider an isosceles wedge $\angle A'KA$ in Figure 5 defined by

$$A =  \begin{bmatrix}1\\ 0\end{bmatrix}, \quad A' = \begin{bmatrix}-1\\ 0\end{bmatrix}, \quad K  = \begin{bmatrix}0\\ -k\end{bmatrix}, \quad k>0.$$

\begin{figure}[H]
\centering
\begin{subfigure}[b]{.49\linewidth}
\includegraphics[width=\linewidth]{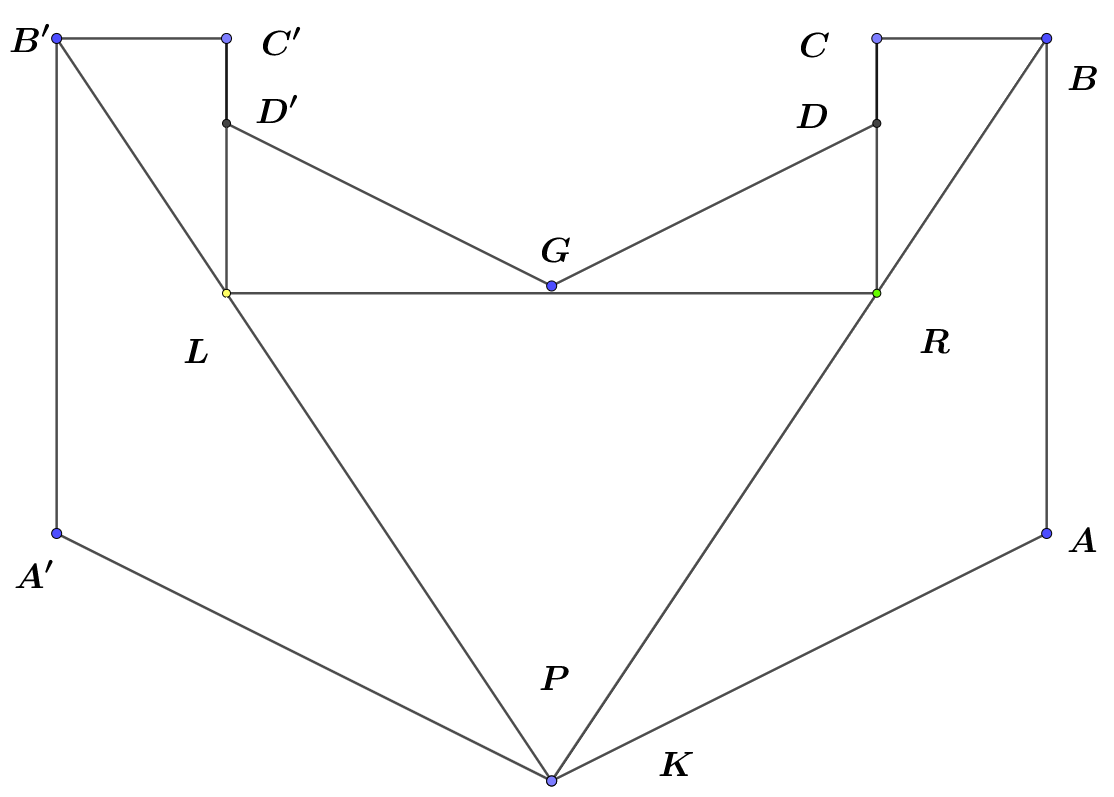}
\caption{$t = 0$}\label{fig:A}
\end{subfigure}
\begin{subfigure}[b]{.49\linewidth}
\includegraphics[width=\linewidth]{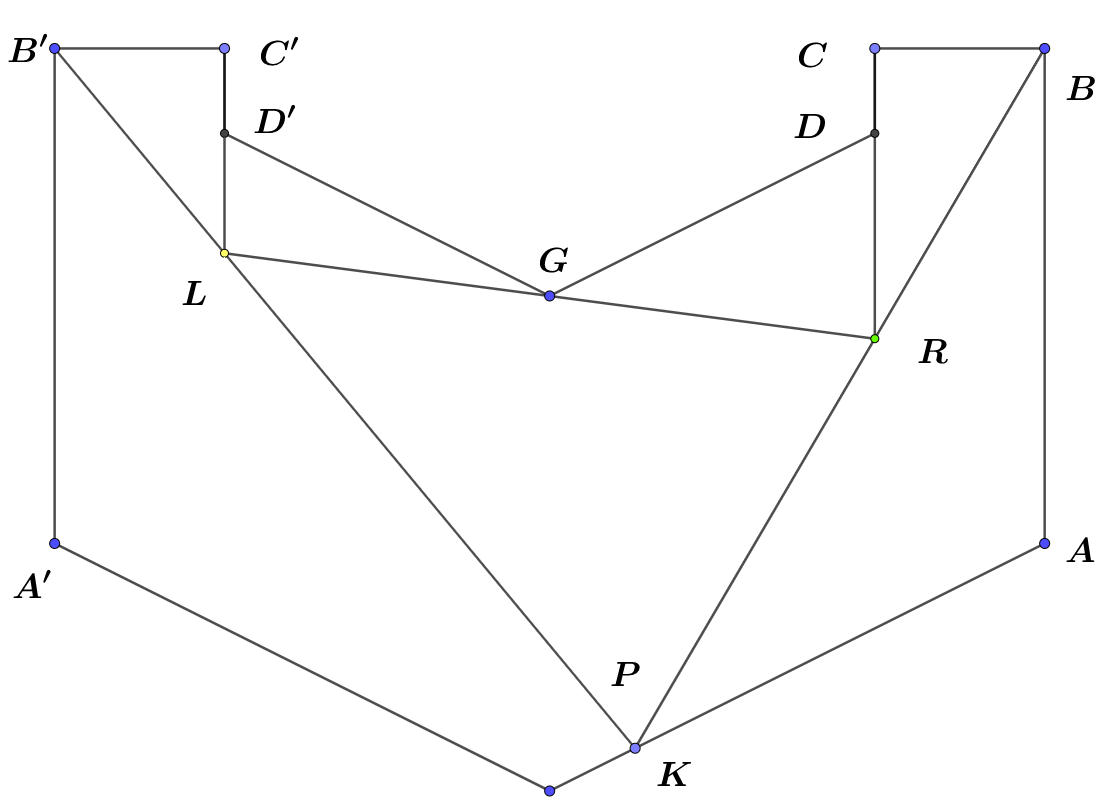}
\caption{$t=t_0$}\label{fig:B}
\end{subfigure}

\begin{subfigure}[b]{.49\linewidth}
\includegraphics[width=\linewidth]{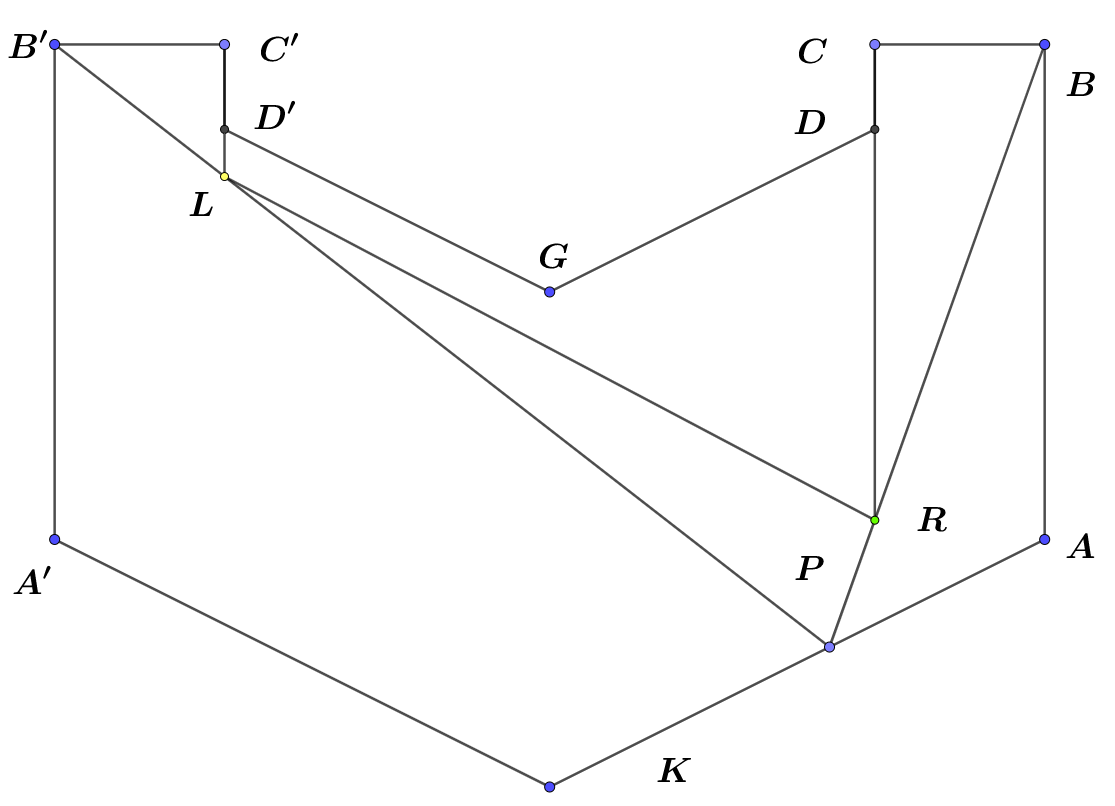}
\caption{$t_0<t<c$}\label{fig:C}
\end{subfigure}
\begin{subfigure}[b]{.49\linewidth}
\includegraphics[width=\linewidth]{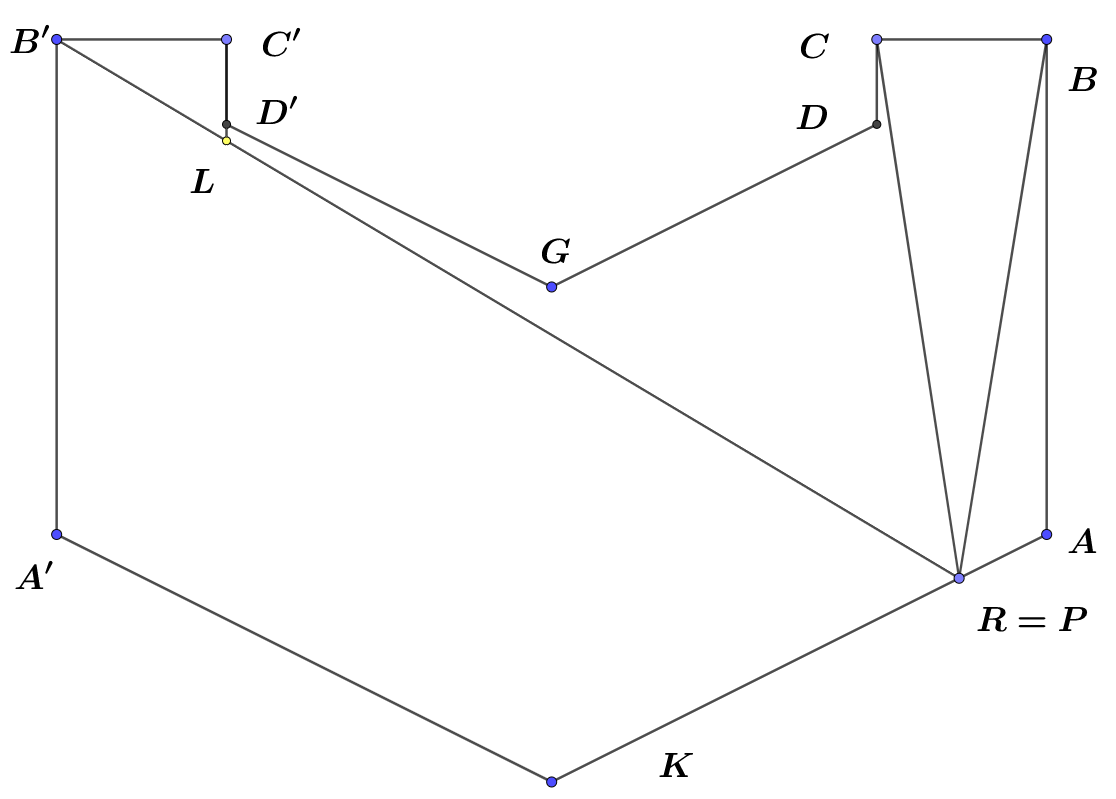}
\caption{$c<t<1$}\label{fig:D}
\end{subfigure}
\caption{The vertex $P$ moves along $KA$.}
\label{fig:mph}
\end{figure}

Let the vertex $P = (t, kt-k)$ move along the edge $KA$ for $t\in (0, 1)$. The lines $PB$ and $PB'$ are
$$y - 1 = \frac{1 - k(t-1)}{1-t}(x-1) \text{ for } PB, \quad y -1 = \frac{k(t-1)-1}{t+1}(x+1) \text{ for } PB'.$$
Set $R$ as the intersection of $PB$ with $x = c$, and $L$ as the intersection of $PB'$ with $x = -c$. The line $LR$ is given by 
$$\frac{y - (1 + (k(t-1)-1)(1-c)/(t+1))}{(1/(1-t) + k)(c-1) - (k(t-1)-1)(1-c)/(t+1)} = \frac{x+c}{2c}.$$
The $y$-intercept $H$ of $LR$ is 
$$h(t, k) = (c-1)\frac{t^2(k+1) - kt}{(1-t^2)} + 1 + (k+1)(c-1).$$

Note that the function $h(t, k)$ is not a monotonic function of $t$, but a monotonic decreasing function of $k$. The maximum of $h(t, k)$ in $0\leq t \leq c$ given below, denoted as $\mathbf{h}(k)$, is a continuous monotonically decreasing function of $k$
$$\mathbf{h}(k) = (c-1)(k + \frac{(\sqrt{2k+1} + 1)^2}{4}) + 1.$$
This corresponds to a clear geometric interpretation: if $k$ increases, the vertex $K$ moves downward, then the vertex $L$ and $R$ moves downward, so $\mathbf{h}(k)$ decreases. 

The polygon $\mathcal{P}$ corresponds to the case $k = 1/2$. The function $h(t, 1/2)$ achieves its maximum at $t_0 = 3-2\sqrt{2}$, with the maximal value 
$$\mathbf{h}(\frac{1}{2}) = (\frac{\sqrt{2}}{2} + \frac{3}{4})c + \frac{1}{4} - \frac{\sqrt{2}}{2} = \frac{1}{2}.$$

Recall that $G = (0, 1/2)^T$. This implies that as $P$ moves along $KA$, the edge $LR$ stays below $G$ shown in Figure 5(A) and 5(C), except at $t = t_0$, when $G$ lies on $LR$ shown in  Figure 5(B).

We highlight the properties of $\mathcal{P}$ by Lemma 4.2 below. It states that the space $X(\mathcal{P}, T)$ is not path-connected, but becomes path-connected after perturbations of vertices $G$ and $K$. We characterize these perturbations in the following definition. 

\begin{definition}
Given the triangulated polygon $(\mathcal{P}, T)$, let 
$$K' = K + \begin{bmatrix}0\\\epsilon\end{bmatrix}, \quad G' = G + \begin{bmatrix}0\\\delta\end{bmatrix}, $$
be a vertical perturbation of $K$ and $G$, and  $\mathcal{P}'$ the polygon after perturbation.  An \textit{admissible perturbation} of $K$ and $G$ is defined as one of the following:
\begin{itemize}
	\item $\epsilon<0$ and $\delta \geq 0$;
	\item $\delta>0$ and $\epsilon \leq 0$;
	\item $\epsilon >0$ and $\delta \geq 3\epsilon$;
	\item $\delta <0$ and $\epsilon \leq  3\delta$.
\end{itemize}
A \textit{forbidden perturbation} of $K$ and $G$ is defined as  one of the following:
\begin{itemize}
	\item $\epsilon=0$ and $\delta < 0$;
	\item $\delta=0 $ and $\epsilon > 0$;
\end{itemize}
\end{definition}

We will see that if $\mathcal{P}'$ is an admissible perturbation of $\mathcal{P}$, we can slide the vertex $P$ from the left to right. Let $\textbf{proj}_x^P$ be the continuous projection from $X(\mathcal{P}, T)$ to the $x$-coordinate of the vertex $P$. 

\begin{lemma}
The space $X(\mathcal{P}, T)$ is not empty. Moreover, $X(\mathcal{P}, T)$ is not path-connected. If $\mathcal{P}'$ is the polygon after an admissible perturbation of $\mathcal{P}$,
then there exists a path $\gamma(t)$ for $t\in[-c, c]$ in $X(\mathcal{P}', T)$ such that $\textbf{proj}_x^P(\gamma(t)) = t$ for $t\in[-c, c]$. On the other hand, if $\mathcal{P}'$ is the polygon after a forbidden perturbation of $\mathcal{P}$, then $X(\mathcal{P}', T)$ is not path-connected.
\end{lemma}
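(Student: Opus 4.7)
The plan is to leverage the explicit computation of $h(t,k)$ and $\mathbf{h}(k)$ carried out above, working throughout with a \emph{canonical family} of configurations in which $P = (t, kt - k)$ lies on segment $KA$ (or its mirror on $KA'$ for $t < 0$), $R$ is the intersection of line $PB$ with $\{x = c\}$, and $L$ is the intersection of line $PB'$ with $\{x = -c\}$. The defining property of $\mathcal{P}$ is the sharp identity $\mathbf{h}(1/2) = 1/2 = y_G$, attained at $t_0 = 3 - 2\sqrt{2}$: at this single value the segment $LR$ passes exactly through $G$, producing a degenerate triangle $GLR$.

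Non-emptiness is immediate: taking any $t \in (0, c)$ with $t \neq t_0$ gives $h(t, 1/2) < 1/2$, so $G$ lies strictly above $LR$; finitely many elementary checks on the remaining edges confirm that the canonical configuration is an embedding realizing $T$ and thus an element of $X(\mathcal{P}, T)$.

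For path-disconnectedness of $X(\mathcal{P}, T)$, I would argue by contradiction that no path $\gamma$ can join configurations with $x_P > 0$ and $x_P < 0$. By continuity of $\textbf{proj}_x^P$ there would exist $s^\ast$ with $x_P(\gamma(s^\ast)) = 0$. The combinatorics of $T$ force $G$ and $K$ to lie on opposite sides of the interior edge $LR$, and I would show, using the sharp bound $\mathbf{h}(1/2) = y_G$, that the supremum (over all valid placements of $L$ and $R$ compatible with $P$ at $x_P = 0$) of the $y$-coordinate at $x=0$ of the line through $L$ and $R$ is exactly $y_G$, forcing some edge to pass through $G$. This is the main technical obstacle: rigorously showing that the canonical family is extremal, so that no clever placement of $L$ and $R$ can evade the obstruction displayed by it.

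For an admissible perturbation, set $k' = 1/2 - \epsilon$ and $y_{G'} = 1/2 + \delta$; I would verify using the explicit formula $\mathbf{h}(k) = (c-1)\bigl(k + (\sqrt{2k+1}+1)^2/4\bigr) + 1$ that each of the four admissible cases implies $y_{G'} \geq \mathbf{h}(k')$, with strict inequality except on the boundary of the admissible region. The coefficient $3$ in the conditions $\delta \geq 3\epsilon$ and $\epsilon \leq 3\delta$ is produced by this estimate (essentially a bound on $\mathbf{h}'$ near $k=1/2$, adjusted to be safely large over the full range $t \in [-c, c]$). Once this inequality holds, the canonical family parametrized by $t \in [-c, c]$ with $P$ on the segments $AK'$ and $A'K'$ is a continuous path $\gamma$ in $X(\mathcal{P}', T)$ satisfying $\textbf{proj}_x^P(\gamma(t)) = t$. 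For forbidden perturbations ($\epsilon = 0, \delta < 0$ or $\delta = 0, \epsilon > 0$) the strict inequality $\mathbf{h}(k') > y_{G'}$ holds, the obstruction at $t_0$ survives the perturbation, and the disconnection argument of the previous paragraph applies verbatim to $X(\mathcal{P}', T)$.
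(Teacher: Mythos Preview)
Your overall strategy matches the paper's, and the admissible/forbidden perturbation parts are essentially what the paper does (the constant $3$ indeed comes from the crude bound $|\mathbf{h}'|\le 3$, $|(\mathbf{h}^{-1})'|\le 3$ derived from $1/3\le|\mathbf{h}'(k)|\le 1/2$). However, your disconnection argument has a genuine error in locating the obstruction.

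You claim that no path can join $x_P>0$ to $x_P<0$, and that the degeneracy occurs at $x_P=0$. This is not where the obstruction lies. The function $h(t,1/2)$ attains its maximum $\mathbf{h}(1/2)=1/2$ at $t=t_0=3-2\sqrt{2}>0$, \emph{not} at $t=0$; in fact $h(0,1/2)=1+\tfrac{3}{2}(c-1)<\tfrac{1}{2}$. The paper shows
\[
\textbf{proj}_x^P\bigl(X(\mathcal{P},T)\bigr)=(-1,-t_0)\cup(-t_0,t_0)\cup(t_0,1),
\]
so configurations with $x_P=0$ \emph{do} exist (they lie in the middle component), and the space has three components separated at $x_P=\pm t_0$. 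Your invocation of ``the sharp bound $\mathbf{h}(1/2)=y_G$'' at $x_P=0$ conflates the global maximum of $h(\cdot,1/2)$ with its value at $t=0$. The corrected argument is: any path from $x_P<t_0$ to $x_P>t_0$ must pass through $x_P=t_0$, and at that value the constraint $h(t_0,1/2)=y_G$ forces $G$ onto the line $LR$, which is incompatible with an embedding.

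Even after relocating the obstruction to $t_0$, the step you flag as ``the main technical obstacle''---showing that the canonical family is extremal, so that \emph{no} placement of $P,L,R$ with $x_P=t_0$ embeds---remains to be carried out. The paper's own treatment of this point is informal (it argues only that the canonical configuration degenerates and that perturbing $P$ vertically does not help), so you are not being held to a higher standard than the original; but you should at least state the correct claim before sketching why the canonical family is optimal.
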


The geometric intuition of admissible perturbations is that we can move vertex $K$ downward or move vertex $G$ upward so that $X(\mathcal{P}', T)$ is connected. If we move $K$ upward, $G$ needs to be moved upward by a certain amount so that $X(\mathcal{P}', T)$ is connected. On the other hand, if we move $K$ upward or $Q$ downward, $X(\mathcal{P}', T)$ is not path-connected.  

\begin{proof}
We will show that 

$$\textbf{proj}_x^P(X(\mathcal{P}, T)) = (-1, -t_0)\cup (-t_0, t_0) \cup (t_0, 1).$$

Let $P = (t, 1/2t - 1/2)$ move along $KA$. If $t \neq t_0 = 3-2\sqrt{2}$, then $LR$ defined above is below $G$, hence we can displace the vertex $P$ vertically by a small distance into $\mathcal{P}$. Then move $R$ along a small vector pointing to the sector $\angle BRC$ and move $L$ along a small vector pointing to the sector $\angle B'LC'$ as shown in Figure 5. Then $G$ stays above $LR$ after the displacement of $L$ and $R$.

Since $L$ and $R$ move continuously as $P$ moves along $KA$, we can perturb $P$, $L$, and $R$ to construct a continuous map $\eta(t)$ for $t \in [0, t_0)\cup (t_0, c]$ in $X(\mathcal{P}, T)$ with $\textbf{proj}_x^P(\eta(t)) = t$. As mentioned before,  the segment $LR$ intersects with $G$ if $t = t_0$, so we can't move $P$ vertically to the interior of $\mathcal{P}$. This implies that there is no geodesic  triangulation $\tau$ in $X(\mathcal{P}, T)$ with $\textbf{proj}_x^P(\tau) = t_0$.

The vertices $B$, $D$, $G$, and $A'$ in $\mathcal{P}$ are chosen to be collinear. Hence the line segment connecting $B$ and any interior point of the segment $KA$ is contained in $\mathcal{P}$. For $c< t<1$, set $R = P$, then $LR = LP$ stays below $G$ and is contained in $\mathcal{P}$.  We can apply similar displacement of $P$, $L$, and $R$ to generate a geodesic triangulation $\tau$ with $\textbf{proj}_x^P(\tau) = t$. This shows that $(c, 1) \subset\textbf{proj}_x^P(X(\mathcal{P}, T))$. By symmetry, it follows that $$\textbf{proj}_x^P(X(\mathcal{P}, T)) = (-1, -t_0)\cup (-t_0, t_0) \cup (t_0, 1).$$

On the other hand, for $k\in(0, \infty)$, the derivative of $\mathbf{h}(k)$ is bounded by 
$$\frac{1}{3}\leq |\mathbf{h}'(k)| = |(c-1)(1+ \frac{1}{2}(1 + \frac{1}{\sqrt{2k+1}}))| \leq \frac{1}{2}.$$

Then $|\mathbf{h}'| \leq 3$ and $|(\mathbf{h}^{-1})'| \leq 3$. It implies that if $\epsilon>0$ $$\mathbf{h}(\frac{1}{2} - \epsilon) \leq \frac{1}{2} + 3\epsilon.$$

This means that the segment $LR$ is always below $G' = (0, 1/2 + \delta)$, if we apply one of the four cases of admissible perturbations. By similar displacements of vertices $P$, $L$, and $R$ as before, we can construct a continuous path $\gamma(t)$ for $t\in [0, 1)$ in $X(\mathcal{P}', T)$ with $\textbf{proj}_x^P(\gamma(t)) = t$. 

Extending this path by symmetry, we can assume this path is defined on $(-1, 1)$, and $\gamma(-t)$ is the reflection of $\gamma(t)$ above $y$-axis. Moreover, we can assume that the vertices $P$, $L$, and $R$ given by $\gamma(c)$ also produce a geodesic triangulation on $\mathcal{P}$. Then the restriction of $\gamma(t)$ on $[-c, c]$ is the desired path. 

Finally, if $\mathcal{P}'$ is a forbidden perturbation of $\mathcal{P}$, the maximal $y$-intercept $H$ of $LR$ as $P$ moves along $KA$ exceeds the height of $G'$. In the first case of forbidden perturbations, $K$ is fixed and $G$ is moved downward, so $LR$ intersects with $G'$ when $t = t_0$. In the second case of forbidden perturbations, $G$ is fixed and $K$ is moved upward. Recall that $h(t_0, k)$ is a monotonic decreasing function of $k$. Then the $y$-intercept $H$ of $LR$ when $t = t_0$ lies above $G$. Hence in both cases, $LR$ intersects with $G'$ when $t = t_0$. By a similar argument as before, $t= t_0$ is not in $\textbf{proj}_x^P(X(\mathcal{P}', T))$, which implies that $X(\mathcal{P}', T)$ is not path-connected. 
\end{proof}

The key to constructing the path $\gamma$ is that the segment $LR$ never intersects $G$ as $P$ moves. Following the argument in Bing and Starbird \cite{bing1978linear}, one can show that $X(\mathcal{P}', T)$ is path-connected for an admissible perturbation $\mathcal{P}'$ of $\mathcal{P}$.

\subsection{The main idea of the construction}
The main idea to construct a polygon $\Omega^n$ with non-trivial $n$-th homotopy group is to stack $n+1$ copies of $\mathcal{P}$ together.  The polygon $\Omega^1$ is shown in Figure 6. Again, we only draw part of the triangulation $T^1$ as before. The full triangulation can be constructed by adding edges back once the interior vertices are fixed. 

\begin{figure}[H]
  \includegraphics[width=0.8\linewidth]{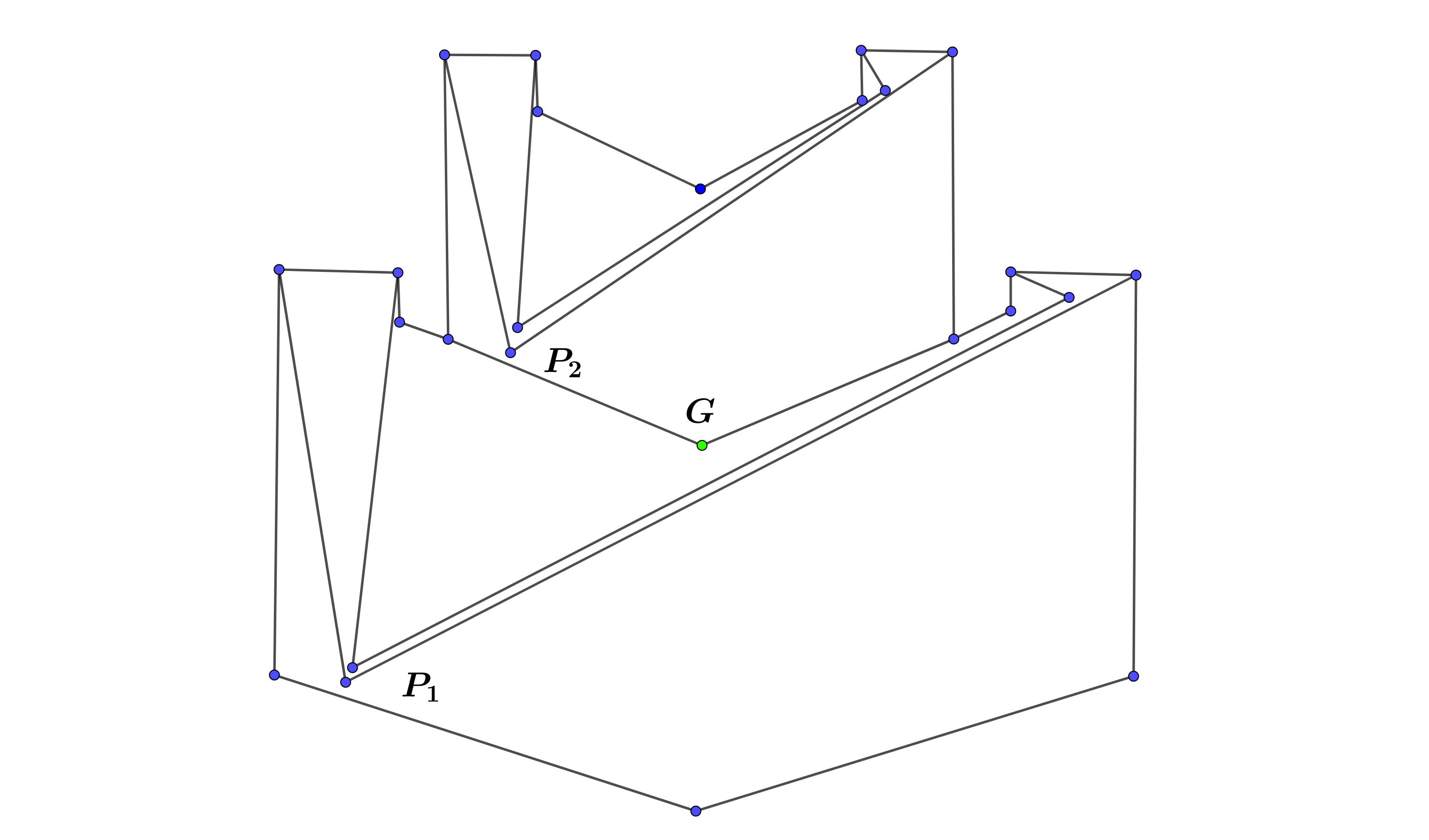}
      \caption{The polygon $\Omega^1$ with non-trivial $\pi_1(X(\Omega^1, T^1))$.}
\end{figure}

We illustrate the idea informally when $n = 1$. To show that $X(\Omega^1, T^1)$ has a non-trivial fundamental group, we construct a non-trivial loop in $X(\Omega^1, T^1)$ using the two non-homotopic paths in Figure 7. 

\begin{figure}
  \includegraphics[width=1\linewidth]{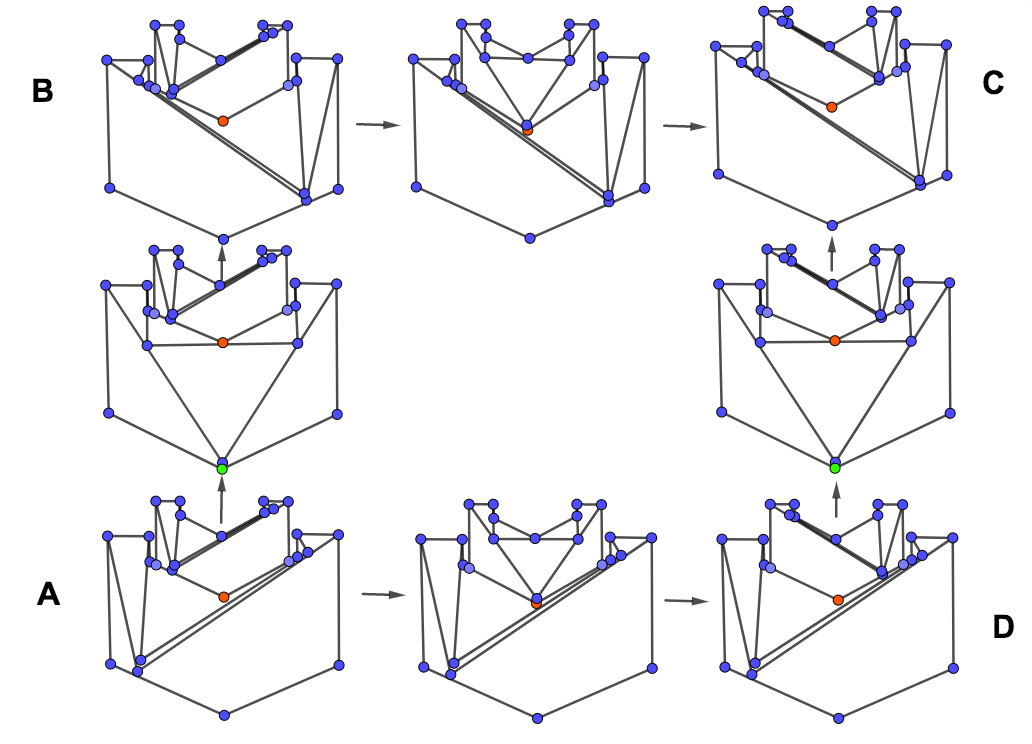}
      \caption{Two non-homotopic  paths $\mathbf{A}\to\mathbf{D}\to\mathbf{C}$ and $\mathbf{A}\to\mathbf{B}\to\mathbf{C}$.}
\end{figure}

Starting with the configuration $\mathbf{A}$, we construct two paths connecting $\mathbf{A}$ and $\mathbf{C}$, shown as the paths $\mathbf{A} \to \mathbf{B}\to\mathbf{C}$ and $\mathbf{A} \to \mathbf{D}\to\mathbf{C}$. In the first path, move the vertex $G$ upward so that $\mathcal{P}_1'$ is an admissible perturbation of $\mathcal{P}_1$. Then we can slide the vertex $P_1$ from the left to right to reach the configuration $\mathbf{B}$. Then move $G$ down so that $\mathcal{P}_2'$ is an admissible perturbation of $\mathcal{P}_2$. Slide the vertex $P_2$ from the left to right to the configuration $\mathbf{C}$. In the second path, the order is reversed: first move $G$ down to slide $P_2$ from the left to right to the configuration $\mathbf{D}$, then move $G$ up to slide $P_1$ to the configuration $\mathbf{C}$.

We claim that the two paths above are not homotopic. Equivalently, the loop $\mathbf{A} \to \mathbf{B}\to\mathbf{C} \to\mathbf{D}\to\mathbf{A}$ represents a non-trivial element in $\pi_1(X(\Omega^1, T^1))$. 

It follows from two observations: if we project this loop to the $x$-coordinate of the vertex $P_1$ and $x$-coordinate of the vertex $P_2$ with suitable rescaling, the image of the loop is the cycle in the plane in Figure 8. 

The point $(t_0, t_0)$ lies inside the cycle, but is not in the image of the projection of $X(\Omega^1, T^1)$. If the $x$-coordinate of $P_1$ is $t_0$, then the vertex $G$ has to be moved upward by Lemma 4.2. Similarly, the fact that the $x$-coordinate of $P_2$ equals $t_0$ implies that $G$ has to be moved downward. This is a contradiction. 

Hence this cycle is not trivial in $\textbf{proj}_x^P(X(\Omega^1, T^1))$. Hence the loop $\mathbf{A} \to \mathbf{B}\to\mathbf{C} \to\mathbf{D}\to\mathbf{A}$ is not trivial in $X(\Omega^1, T^1)$. A rigorous proof is given in the next section. 

\begin{figure}[H]
  \includegraphics[width=0.5\linewidth]{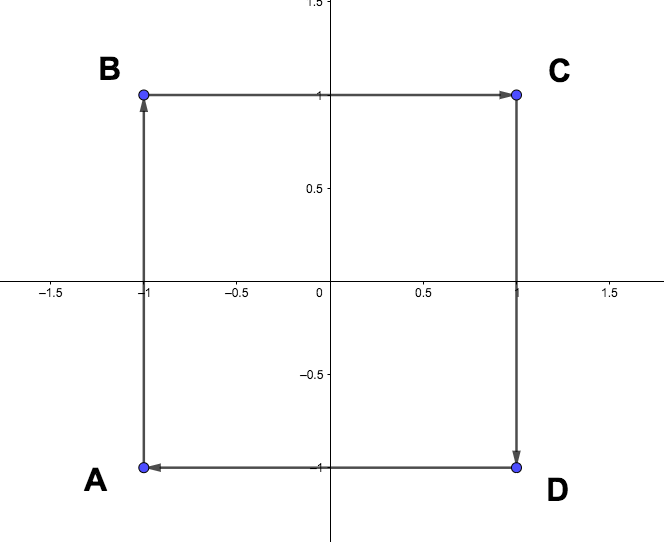}
      \caption{The projection of the loop in $\mathbb{R}^2$.}
\end{figure}

For any $n\geq 1$, we stack $n+1$ copies of $\mathcal{P}$ together to construct the polygons $\Omega^n$ with triangulation $T^n$. Denote these copies of $\mathcal{P}$ by $\mathcal{P}_i$ for $i \in I=\{ 1, 2, \cdots, n+1\}$. Let $P_i$ be the corresponding vertex $P$ of $\mathcal{P}$ in each $\mathcal{P}_i$ for $i \in I$.

Start with the first copy $\mathcal{P}_1 = \mathcal{P}$. Given the fact $\angle A'KA+\angle D'GD = 360^{\circ}$, we can inductively identify the vertex $K_{i+1}$ of $\mathcal{P}_{i+1}$ with the vertex $G_i$ of $\mathcal{P}_i$, and the wedge $\angle A'_{i+1}K_{i+1}A_{i+1}$ with part of $\angle D'_{i}G_iD_i$ by similarity transformations of the plane
$$g_i(x, y) = a_i(x, y) + (0, b_i)\quad 0<a_i <1, \quad 0<b_i, \quad  i =  2, 3, \cdots, n+1 $$ 
 to construct $\Omega^n$. Define $\mathcal{P}_i$ in $\Omega^n$ to be the image of $\mathcal{P}$ under the similarity transformations $g_i$. Note that $g_1$ is the identity map of $\mathbb{R}^2$ with $a_1 = 1$ and $b_1 = 0$. Each $\mathcal{P}_i$ is a subgraph of the $1$-skeleton of $T^n.$
 
Figure 9 shows the polygon $\Omega^n$ with three copies of $\mathcal{P}$ stacked together. Notice that there is a small gap between two consecutive copies of $\mathcal{P}$ by proper choices of similarity transformations. We will show that for any $n>0$, $X(\Omega^n, T^n)$ has non-trivial $n$-th homotopy group. 

\begin{figure}[H]
  \includegraphics[width=0.6\linewidth]{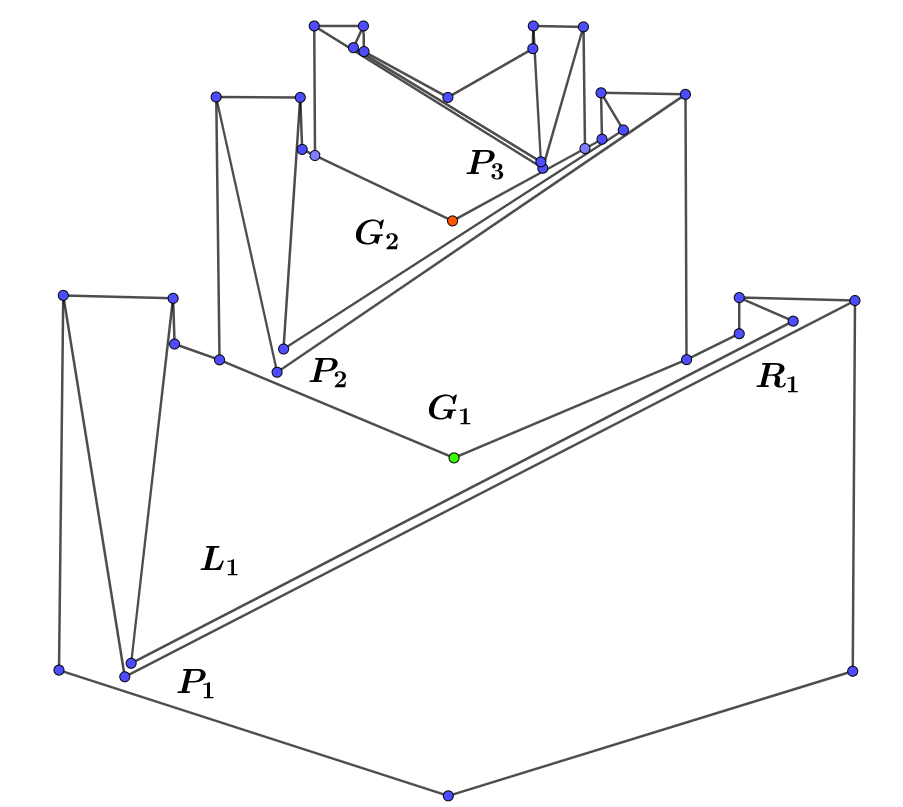}
      \caption{A configuration in $X(\Omega^2, T^2)$ with $\pi_2(X(\Omega^2, T^2))$ non-trivial.}
\end{figure}

\subsection{Proof of Theorem 1.1}
To prove the new result of this paper, 
we first establish the existence of admissible perturbations of $\mathcal{P}_i$ for $i\in J$, where $J$ is strict subset of $I$.
\begin{lemma}
Given $\epsilon>0$, for any strict subset $J$ of $I$, there exist admissible perturbations of vertices $K_i$ and $G_i$ for $i\in I$, with the two boundary vertices $K_1$ and $G_{n+1}$ fixed, such that if $j\in J$, the space $X(\mathcal{P}_j', T)$ is path-connected. Moreover, the displacements of all vertices $K_i$ and $G_i$ are smaller than $\epsilon$.
\end{lemma}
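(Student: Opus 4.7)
The plan is to reduce the problem to a one-dimensional combinatorial selection of signs and magnitudes. Write $\Delta_i$ for the vertical displacement of the shared vertex $G_i = K_{i+1}$ (for $i = 1, \ldots, n$), and set $\Delta_0 := 0$, $\Delta_{n+1} := 0$ to encode that $K_1$ and $G_{n+1}$ are held fixed. Because each similarity $g_j(x,y) = a_j(x,y) + (0, b_j)$ has $a_j > 0$, the admissibility clauses of Definition 4.1 (which depend only on the signs of $\epsilon,\delta$ and on the ratio $\delta/\epsilon$) are unaffected by the rescaling between plane coordinates and the normalized coordinates of $\mathcal{P}_j$. Consequently the perturbed copy $\mathcal{P}_j'$ is admissibly perturbed iff the pair $(\Delta_{j-1}, \Delta_j)$ satisfies one of the four clauses of Definition 4.1. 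The task reduces to choosing $\Delta_1, \ldots, \Delta_n$ with $\max_i |\Delta_i| < \epsilon$ so that $(\Delta_{j-1}, \Delta_j)$ is admissible for every $j \in J$.

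Using $J \subsetneq I$, I fix a pivot $j_0 \in I \setminus J$ and build two geometric chains meeting at $j_0$. For a small parameter $\mu > 0$, on the lower segment set $\Delta_j := \mu \cdot 3^{j-1}$ for $j = 1, \ldots, j_0 - 1$: the pair $(\Delta_0, \Delta_1) = (0, \mu)$ is then covered by clause 2 ($\delta > 0$, $\epsilon \leq 0$), and $(\Delta_{j-1}, 3\Delta_{j-1})$ by clause 3 ($\epsilon > 0$, $\delta \geq 3\epsilon$) for $2 \leq j \leq j_0 - 1$. On the upper segment set $\Delta_j := -\mu \cdot 3^{n-j}$ for $j = j_0, \ldots, n$: the pair $(\Delta_n, \Delta_{n+1}) = (-\mu, 0)$ is covered by clause 1 ($\epsilon < 0$, $\delta \geq 0$), and $(3\Delta_j, \Delta_j)$ by clause 4 ($\delta < 0$, $\epsilon \leq 3\delta$) for $j_0 + 1 \leq j \leq n$. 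At the pivot $j_0$ the pair $(\Delta_{j_0 - 1}, \Delta_{j_0})$ has mismatched signs (nonnegative paired with negative), so admissibility fails there, but $j_0 \notin J$ so nothing is required. The extreme cases $j_0 = 1$ and $j_0 = n+1$ just leave one chain empty and are handled by the same formulas. Taking $\mu \leq \epsilon \cdot 3^{1-n}$ ensures $|\Delta_j| < \epsilon$ for every $j$.

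The main obstacle is structural rather than computational: for any sequence with $\Delta_0 = \Delta_{n+1} = 0$, admissibility must fail at least once. Indeed, clause 2 is the only one of Definition 4.1 compatible with $\epsilon = 0$, so admissibility at $j=1$ forces $\Delta_1 > 0$; symmetrically, clause 1 at $j = n+1$ forces $\Delta_n < 0$. Then the first index at which $\Delta_j$ transitions from positive to nonpositive violates every one of the four clauses (clauses 3, 4 only permit sign-preserving transitions, and clauses 1, 2 only the opposite sign change). The strict-subset hypothesis $J \subsetneq I$ is precisely what supplies one exempt index to absorb this mandatory sign flip, and the construction above places that flip at the chosen pivot $j_0$.
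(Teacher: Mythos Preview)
Your argument is correct. Both you and the paper exploit the same core mechanism---a geometric sequence with ratio $3$ so that consecutive pairs land exactly on the boundary of clause 3 or clause 4 of Definition 4.1---but you organize it differently. The paper decomposes $J$ into maximal runs of consecutive integers and treats each run locally: for a run not containing $1$ it pushes $K_i, K_{i+1}, \ldots$ downward by $\epsilon, \epsilon/3, \epsilon/9, \ldots$, and for the run starting at $1$ it pushes $G_1, \ldots, G_m$ upward analogously. You instead choose a single pivot $j_0 \in I\setminus J$ and build one global sequence $(\Delta_j)$ that is admissible at \emph{every} index except $j_0$; since $J$ avoids $j_0$ this certainly covers $J$. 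Your version is tidier (no case analysis over chains) and makes the role of the hypothesis $J\subsetneq I$ transparent: your final paragraph shows that with $\Delta_0=\Delta_{n+1}=0$ any admissible chain must break at least once, so one exempt index is genuinely needed. The paper's approach, on the other hand, keeps all perturbations local to $J$, which is convenient when one wants the vertices outside the affected chains left untouched. Either route suffices for how the lemma is used downstream.
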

\begin{proof}
 
Let $\{i, i+1, \cdots, i+m\}$ be a maximal chain of consecutive integers in $J$ with $m\geq 0$. If $i \neq 1$, move $K_{j}$ downward by $\epsilon/3^{j-i}$ for $j \in\{ i, i+1, \cdots, i+m\}$, producing admissible perturbations of $\mathcal{P}_j$ for $j \in\{ i, i+1, \cdots, i+m\}$. If the maximal chain of consecutive integers is $\{1, 2, \cdots, m\}$, then move $G_j$ upward by $\epsilon/3^{m-j}$ for $j \in \{1, 2, \cdots, m\}$. It also produces admissible perturbations of $\mathcal{P}_j$ for $j \in \{1, 2, \cdots, m\}$.

In general, there are many maximal chains of consecutive integers with different lengths in $J$. We can perform the admissible perturbations above separately for each maximal chain. Notice that since $J$ is a strict subset of $I$, the boundary vertices $K_1$ and $G_{n+1}$ can be fixed.  

Therefore, each of the spaces $X(\mathcal{P}_j', T)$ is path-connected for $j\in J$ after the admissible perturbations on $G_i$ and $K_i$ above, and the displacements of $K_i$ and $G_i$ are smaller than $\epsilon$.
\end{proof}

We construct geodesic triangulations on $\Omega^n$ by combining  geodesic triangulations of each $\mathcal{P}_i$, or their admissible perturbations $\mathcal{P}_i'$ for $i\in I$. Define a path in $X(\mathcal{P}'_i, T)$ by 
$$\gamma_i(t) = g_i(\gamma(t)) = a_i\gamma(t) + b_i,  \quad  t\in[-c, c], i\in I.$$
The path $\gamma(t)$ is constructed in Lemma 4.2 with $c = (2+\sqrt{2})/(3+\sqrt{2})$. A geodesic triangulation in $X(\mathcal{P}'_i, T)$ can be regarded as a part of a geodesic triangulation in $X(\Omega^n, T^n)$, determining the positions of $P_i$, $L_i$, and $R_i$. Start with a geodesic triangulation in $X(\Omega^n, T^n)$ such that the positions of $P_i$, $L_i$, and $R_i$ are given by $\gamma_i(-c)$. The path $\gamma_i$ in $X(\mathcal{P}'_i, T)$ produces a path in $X(\Omega^n, T^n)$ with $\textbf{proj}_x^{P_i}(\gamma_i(t)) = a_it$ for $t\in [-c, c]$. Notice that this path only moves three vertices $P_i$, $L_i$, and $R_i$ in $\mathcal{P}_i'$ and fixes all the other vertices.

Combining paths above, we can simultaneously move vertices $P_i$, $R_i$, and $L_i$ after suitable admissible perturbations. Start with a geodesic triangulation in $X(\Omega^n, T^n)$, whose vertices $P_i$, $R_i$, and $L_i$ are determined by $\gamma_i(-c)$. By Lemma 4.3, for any strict subset $J = \{i_1, i_2, \cdots, i_m\}$ of  $I$, there exist admissible perturbations of $\mathcal{P}_{i_k}$ such that $X(\mathcal{P}_{i_k}', T)$ is path-connected for $i_k \in J$. Then each $\gamma_{i_k}(t)$ for $i_k\in J$ defines a path in $X(\Omega^n, T^n)$, which restricts to the identity map except in $\mathcal{P}_{i_k}'$. 

For any $(y_{1}, \cdots, y_{m})$ with $|y_k| \leq c$, we define $\Gamma(\pm c, \cdots, \pm c, y_{1}, y_{2}, \cdots, y_{m})$ to be the geodesic triangulation in $X(\Omega^n, T^n)$ which is determined by $\gamma_{i_k}(y_k)$ for $i_k\in J$ and $\gamma_i(\pm c)$ for $i\not\in J$. This means that the positions of $P_{i_k}$, $L_{i_k}$, and $R_{i_k}$ are determined by paths $\gamma_{i_k}(y_k)$ if $i_k \in J$, and all the other vertices are fixed at $\gamma_{i}(\pm c)$ for $i\not\in J$.  Using the map $\Gamma$, we prove that 

\begin{proposition}
The homotopy group $\pi_n(X(\Omega^n, T^n))$ is not trivial.
\end{proposition}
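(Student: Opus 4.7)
The plan is to exhibit an explicit continuous map $\Phi : S^n \to X(\Omega^n, T^n)$ and detect its non-triviality in $\pi_n$ by composing with a projection to $\mathbb{R}^{n+1}$. Define
\[
F : X(\Omega^n, T^n) \to \mathbb{R}^{n+1}, \qquad F(\tau) = \bigl(\textbf{proj}_x^{P_1}(\tau), \ldots, \textbf{proj}_x^{P_{n+1}}(\tau)\bigr),
\]
and set $p^* = (a_1 t_0, a_2 t_0, \ldots, a_{n+1} t_0) \in \mathbb{R}^{n+1}$. The strategy is to show that $p^* \notin F(X(\Omega^n, T^n))$ and that $\Phi$ wraps once around $p^*$, so $F \circ \Phi$ represents a generator of $\pi_n(\mathbb{R}^{n+1} \setminus \{p^*\}) \cong \mathbb{Z}$, forcing $[\Phi] \neq 0$ by functoriality.

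The first task is to verify $p^* \notin F(X(\Omega^n, T^n))$. I would extend the forbidden-perturbation analysis of Lemma 4.2 to the two-dimensional motions that the shared vertices $G_i = K_{i+1}$ enjoy inside $\Omega^n$. Write $\epsilon_i$ for the vertical displacement of $K_i$ and $\delta_i$ for that of $G_i$, so that $\epsilon_{i+1} = \delta_i$ for $i = 1, \ldots, n$, while $\epsilon_1 = \delta_{n+1} = 0$ is forced by the boundary vertices $K_1$ and $G_{n+1}$ of $\Omega^n$. The condition $\textbf{proj}_x^{P_i}(\tau) = a_i t_0$ obliges $(\epsilon_i, \delta_i)$ to satisfy one of the four admissibility cases of Definition 4.1. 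Combined with $\epsilon_1 = 0$, the constraint at $i = 1$ leaves only $\delta_1 > 0$. Inductive application of Case 3 then gives $\delta_i \geq 3^{\,i-1} \delta_1 > 0$ for every $i$. But $\delta_{n+1} = 0$ combined with the constraint at $i = n+1$ forces $\delta_n = \epsilon_{n+1} < 0$, contradicting $\delta_n > 0$.

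The second task is the construction of $\Phi$. Identify $S^n \cong \partial([-c, c]^{n+1})$. For $y = (y_1, \ldots, y_{n+1}) \in \partial [-c, c]^{n+1}$, the set $J(y) = \{ i : |y_i| < c\}$ is a strict subset of $I$, so Lemma 4.3 supplies admissible perturbations of the shared vertices (with $K_1$ and $G_{n+1}$ pinned) making $X(\mathcal{P}_j', T)$ path-connected for every $j \in J(y)$. Applying $\Gamma$ to the data $(\mathrm{sign}(y_i) c \text{ for } i \notin J(y),\ y_{i_k} \text{ for } i_k \in J(y))$ yields a configuration $\Phi(y) \in X(\Omega^n, T^n)$ with $\textbf{proj}_x^{P_i}(\Phi(y)) = a_i y_i$ for every $i$. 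To ensure $\Phi$ is continuous across the lower-dimensional strata where several $y_i$ equal $\pm c$, I would select a single continuous family of vertical displacements $z_i(y)$ of the shared vertices, damped by a cutoff $\rho(y)$ that vanishes on each face $\{y_i = \pm c\}$, thereby gluing the perturbations produced by Lemma 4.3 for the various $J(y)$ into one continuous assignment.

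By construction $F \circ \Phi$ is the coordinate rescaling $y \mapsto (a_1 y_1, \ldots, a_{n+1} y_{n+1})$, which sends $\partial [-c, c]^{n+1}$ homeomorphically onto the boundary of the box $\prod_{i=1}^{n+1} [-a_i c, a_i c]$, a topological $n$-sphere enclosing $p^*$ since $|t_0| < c$. Hence $F \circ \Phi$ represents a generator of $\pi_n(\mathbb{R}^{n+1} \setminus \{p^*\}) \cong \mathbb{Z}$, and $[\Phi] \neq 0$ in $\pi_n(X(\Omega^n, T^n))$. The main obstacle is the first task: Lemma 4.2 only treats vertical perturbations of $K, G$, so one must verify that its admissibility dichotomy governs the genuine two-dimensional perturbations available to the interior vertices $G_i = K_{i+1}$ of $\Omega^n$. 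Concretely, this amounts to showing that the position of $G_i$ relative to the critical segment $L_i R_i$ obeys a half-plane criterion whose vertical projection reproduces the admissibility condition, so that the propagation and contradiction above persist in full generality.
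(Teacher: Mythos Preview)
Your strategy coincides with the paper's: build a map $\partial I^{n+1}\to X(\Omega^n,T^n)$ using $\Gamma$ and Lemma~4.3, project via the $x$-coordinates of the $P_i$, and show that the point corresponding to $(t_0,\ldots,t_0)$ is omitted from the image. The paper's construction of the map is an inductive version of your direct one: on each $(k{+}1)$-cell $\mathcal{H}$ of $\partial I^{n+1}$ it first uses a collar $\mathcal{H}\setminus c\mathcal{H}$ to slide the $K_i,G_i$ vertically to the admissible configuration supplied by Lemma~4.3 (with all $P_i$ frozen), and then fills $c\mathcal{H}$ by $\Gamma$. This is exactly a rigorous implementation of your ``single continuous family of vertical displacements damped by a cutoff''; the collar is the cutoff.

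There is, however, a genuine slip in your exclusion argument. You assert that $\textbf{proj}_x^{P_i}(\tau)=a_it_0$ forces $(\epsilon_i,\delta_i)$ to lie in one of the four \emph{admissibility} cases of Definition~4.1, and then iterate Case~3 to obtain $\delta_i\ge 3^{\,i-1}\delta_1$. But admissibility is a \emph{sufficient} condition in Lemma~4.2, not a necessary one; Definition~4.1 does not even partition the $(\epsilon,\delta)$-plane (e.g.\ $\epsilon>0$, $0<\delta<3\epsilon$ is neither admissible nor forbidden). What the computation behind Lemma~4.2 actually gives, via the lower derivative bound $|\mathbf{h}'|\ge 1/3$, is the weaker implication $\epsilon_i\ge 0\Rightarrow\delta_i>0$ whenever $t_0$ is achieved. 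That qualitative statement is all the paper uses---``$G_1$ is moved upward, and inductively all the $G_i$ need to be moved upward''---and it already contradicts $\delta_{n+1}=0$; the factor $3^{\,i-1}$ is neither needed nor justified. Your closing caveat about two-dimensional motion of the interior vertices $G_i=K_{i+1}$ is on target: Lemma~4.2 treats only vertical perturbations, and the paper's exclusion step invokes it without addressing horizontal displacement either, so the half-plane analysis you sketch is indeed required to make that step fully rigorous.
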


\begin{proof}

We construct a continuous map $f$ from the boundary $\partial I^{n+1}$ of an $(n+1)$-dimensional cube $I^{n+1}$ in $\mathbb{R}^{n+1}$ 
$$I^{n+1} = \{(x_1, \cdots, x_{n+1}) \quad | \quad |x_i|\leq 1, \quad  i \in I\}$$
to $X(\Omega^n, T^n)$, representing a non-trivial element in $\pi_n(X(\Omega^n, T^n))$. 

The set $\partial I^{n+1}$ consists of $k$-dimensional cubes for $0\leq k\leq n$. The $n$-dimensional cube corresponds to the facets  
$$F_i^\pm = \{(x_1, \cdots,x_{i-1}, \pm 1, x_{i+1},\cdots, x_{n+1}) \quad| \quad |x_i|\leq 1, \quad  i \in I\}.$$
Notice that all the $k$-dimensional cubes can be represented as the intersections of a collection of facets. We will define $f$ by induction on the dimension of cubes.

The $0$-dimensional cubes correspond to the vertices $(x_1, x_2, \cdots, x_{n+1})$ of $\partial I^{n+1}$ with $x_i = \pm 1$ for $i\in I$. We define the map $f$ on $0$-dimensional cubes by
$$f(x_1, x_2, \cdots, x_{n+1}) =\Gamma(cx_1, cx_2, \cdots, cx_{n+1}).$$
The positions of vertices of $\Gamma(cx_1, cx_2, \cdots, cx_{n+1})$ are determined by $\gamma_{i}(\pm c)$ for each $i\in I$. Notice that if $x_i = 1$, the vertex $P_i$ lies on the right side of $\mathcal{P}_i$ with its $x$-coordinate equal to $a_ic$.

Inductively, assume that $f$ is defined on all the $k$-dimensional cubes in $\partial I^{n+1}$ with $k<n$. We extend $f$ to all $(k+1)$-dimensional cubes in the following two steps.

Up to permutations of indices, let us extend $f$ to $\mathcal{H}$, which is the intersection of the facets $F_i^+$ for $i = 1, \cdots, n-k$, representing a $(k+1)$-dimensional cube

$$\mathcal{H} = \bigcap_{i = 1}^{n-k} F^+_i = \{(1, \cdots, 1, x_{n-k+1},\cdots, x_{n+1}) \quad| \quad |x_i|\leq 1, \quad i \in I.\}$$

Consider the rescaling of the interior of $\mathcal{H}$ by $c$ from the center of $\mathcal{H}$

$$c\mathcal{H} = \bigcap_{i = 1}^{n-k} F^+_i = \{(1, \cdots, 1, x_{n-k+1},\cdots, x_{n+1}) \quad| \quad |x_i|< c, \quad  i \in I.\}$$

In the first step, we extend $f$ from $\partial \mathcal{H}$ to $\mathcal{H} - c\mathcal{H}$ using radial segments from the center, as shown in Figure 10. In each segment $\mathcal{L}$, we fix vertices $P_i$, $L_i$, and $R_i$ in each  $\mathcal{P}_i$, and move vertically the vertices $K_i$ and $G_i$ from the unique configuration $f(\partial \mathcal{H} \cap \mathcal{L})$ to  admissible perturbations of $\mathcal{P}_i$ for $i = n-k+1, \cdots, n+1$.  By Lemma 4.3, such admissible permutations exist, and we can choose $\epsilon$ small such that no intersections occurs during the vertical displacements of vertices $K_i$ and $G_i$. 

\begin{figure}[H]
  \includegraphics[width=0.5\linewidth]{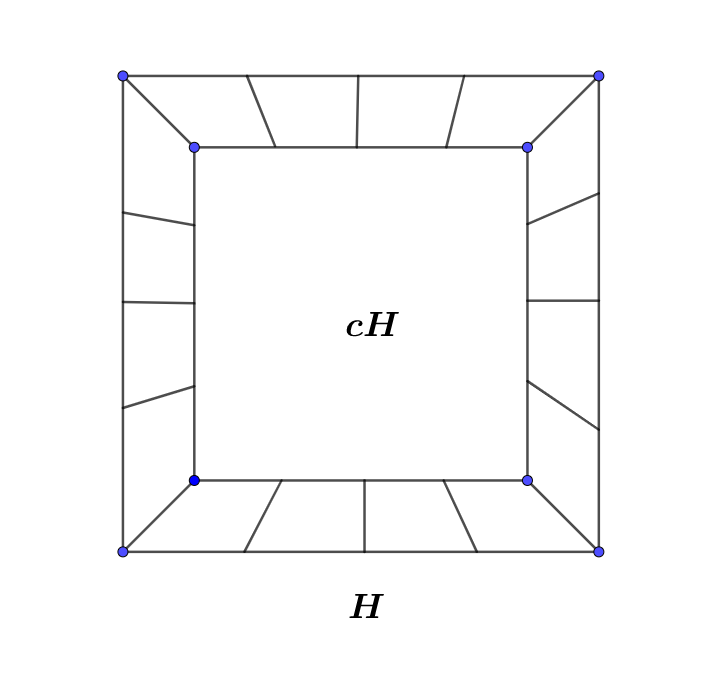}
  \caption{Extension of the embedding $f$.}
\end{figure}

In the second step, we extend $f$ to $c\mathcal{H}$. Notice that $f$ has been extended to the boundary of $c\mathcal{H}$ in the first step.  At boundary points of $c\mathcal{H}$, the spaces $X(\mathcal{P}_i', T)$ for $i = n-k+1, \cdots, n+1$ are path-connected. Define $f$ on $c\mathcal{H}$ by
$$f((1, \cdots, 1, x_{n-k+1}, \cdots, x_{n+1})) = \Gamma(c, \cdots, c, x_{n-k+1}, \cdots, x_{n+1}),    \quad |x_i|\leq c. $$
This determines a continuous extension of $f$ on $\mathcal{H}$. Similar formulas hold for the other $(k+1)$-dimensional faces. By induction, $f$ is extended to $\partial I^{n+1}$. 

We will show that $f$ can't be extended to $I^{n+1}$, because we can't move simultaneously vertices $P_i$ from left to right for all $i\in I$. This implies that $f$ represents a non-trivial element in $\pi_n(X(\Omega^n, T^n))$.

Define a continuous projection from $X(\Omega^n, T^n)$ with scaling $\Phi: X(\Omega^n, T^n) \to \mathbb{R}^{n+1}$ by
$$ \Phi(\tau) = (\frac{\textbf{proj}_x^{P_1}(\tau)}{a_1}, \frac{\textbf{proj}_x^{P_2}(\tau)}{a_2},\cdots, \frac{\textbf{proj}_x^{P_{n+1}}(\tau)}{a_{n+1}}),\quad \tau\in X(\Omega^n, T^n),$$
where $a_i$ is the coefficients of similarity transformations $g_i$ in the definition of $\Omega^n$, and $\textbf{proj}_x^{P_i}$ is the projection to the $x$-coordinate of the vertex $P_i$.

We claim that $\Phi(f(\partial I^{n+1})) =  c\partial I^{n+1} \subset \mathbb{R}^{n+1}$, where $c\partial I^{n+1}$ is a rescaling of $\partial I^{n+1}$ by $c$. Without loss of generality, consider the facet $F_1^+$ and its rescaling

$$cF_1^+ = \{(1, x_2, \cdots, x_{n+1}) \quad| \quad |x_i|\leq c, \quad  i \in I\}.$$
By the definition of $f$ on $cF_1^+$,  
$$f((1, x_2, \cdots, x_{n+1})) = \Gamma(c, x_2, \cdots,x_{n+1}), \quad |x_i|\leq c ,\quad i \in I\}.$$
Since $\textbf{proj}_x^{P_i}(\gamma_i(t)) = a_it$ for $t\in [-c, c]$, 
$$\Phi(f((1, x_2, \cdots, x_{n+1}))) = (c, x_2, \cdots, x_{n+1}) \quad |x_i|\leq c, \quad  i \in I\}.$$
Notice that on $F_1^+ - cF_1^+$, vertices $P_i$ are fixed for $i\in I$. This implies that $\Phi(f(\partial I^{n+1})) = c \partial I^{n+1}$. The restriction of $\Phi\circ f$ on $F_1^+$ is homotopic to the rescaling map of $F_1^+$ by the constant $c$. Hence $\Phi\circ f$ is a degree-one map from $\partial I^{n+1}$ to $c\partial I^{n+1}$. 

Based on this fact, we will show that $f(\partial I^{n+1})$ represents a non-trivial element in $X(\Omega^n, T^n)$. Equivalently, we show that $f$ can't be extended to $I^{n+1}$ in $X(\Omega^n, T^n)$.

Recall $t_0 = 3-2\sqrt{2}$. Set   
$$\vec{v} = (t_0, t_0, \cdots, t_0)\in\mathbb{R}^{n+1}.$$
Since $c = (2 + \sqrt{2})/(3 + \sqrt{2})>t_0$, the vector $\vec{v}$ is contained in $ cI^{n+1} = \Phi(f(\partial I^{n+1}))$.
To show that $f$ can't be extended to $I^{n+1}$, we show that $\vec{v}\not\in \Phi(X(\Omega^n, T^n))$. 

Suppose that there exists a geodesic triangulation $\eta \in X(\Omega^n, T^n)$ such that $\Phi(\eta) = \vec{v}$. Notice that $K_1$ and $G_{n+1}$ are fixed as boundary vertices. By Lemma 4.2, if $t_0\in \textbf{proj}_x^{P_1}(X(\Omega^n, T^n))$, we need to perform an admissible perturbation of the vertex $G_1$ to avoid intersections of edges in $\mathcal{P}_1$. Hence $G_1$ is moved upward, and inductively all the $G_i$ for $i \in I$ need to be moved upward. However, $G_{n+1}$ is fixed, so no such triangulation exists. 
\end{proof}

Theorem 1.1 follows from Proposition 4.3, since $(\Omega^n, T^n)$ provides the required polygons for $n>0$. This theorem resolves a problem proposed in \cite{connelly1983problems}, showing that the space of geodesic triangulations of a planar polygon could have complicated topology.

\section{Further Work}
The homotopy type of the space $X(S, T)$ of geodesic triangulations of a general closed surface $S$ with constant curvature remains unknown. The space of geodesic triangulations of the 2-sphere $X(S^2, T)$ was studied by Awartani-Henderson \cite{awartani1987spaces}, but its homotopy type remains open. If $S$ is a  hyperbolic surface, Hass and Scott \cite{hass2012simplicial} showed that $X(S, T)$ is contractible if $T$ is a one-vertex triangulation. It is conjectured in \cite{connelly1983problems} that $X(S, T)$ is homotopy equivalent to the group of isometries of $S$ homotopic to the identity, when $S$ is equipped with a metric with constant curvature. 

Another open question is whether $X(\Omega, T)$ can realize the homotopy type of any given finite CW complex. This universality property holds for configuration spaces of linkages \cite{kapovich2002universality}. 

\bibliography{ref} 
\bibliographystyle{amsplain}

\end{document}